\date{\today}
\newcommand{\Z}{{\mathbb Z}}
\newcommand{\R}{{\mathbb R}}
\newcommand{\N}{{\mathbb N}}
\newcommand{\D}{{\mathbb D}}
\newcommand{\A}{{\mathcal A}}
\newtheorem{theorem}{Theorem}[section]
\newtheorem{lemma}[theorem]{Lemma}
\newtheorem{prop}[theorem]{Proposition}
\newtheorem{coro}[theorem]{Corollary}
\newtheorem{conj}[theorem]{Conjecture}
\theoremstyle{definition}
\theoremstyle{definition}
\newtheorem{defi}[theorem]{Definition}
\newtheorem{exam}[theorem]{Example}
\begin{document}

\title[Schr\"odinger Operators With Pattern Sturmian Potentials]{Spectral Properties of Schr\"odinger Operators With Pattern Sturmian Potentials}

\author[D.\ Damanik]{David Damanik}

\address{Department of Mathematics, Rice University, Houston, TX~77005, USA}

\email{damanik@rice.edu}

\thanks{D.\ D.\ was supported in part by NSF grants DMS--1067988 and DMS--1361625.}

\author[Q.-H.\ Liu]{Qing-Hui Liu}

\address{Department of Computer Science, Beijing Institute of Technology, Beijing 100081, P.R.\ China}

\email{qhliu@bit.edu.cn}

\thanks{Q.-H.\ L.\ was supported by the National Natural Science Foundation of China, No. 11371055.}

\author[Y.-H.\ Qu]{Yan-Hui Qu}

\address{Department of Mathematics, Tsinghua University, Beijing 100084, P.R.\ China}

\email{yhqu@math.tsinghua.edu.cn}

\thanks{Y.-H.\ Q.\ was supported by the National Natural Science Foundation of China, No. 11201256, 11371055 and 11431007.}

\begin{abstract}
We consider discrete Schr\"odinger operators with pattern Sturmian potentials. This class of potentials strictly contains the class of Sturmian potentials, for which the spectral properties of the associated Schr\"odinger operators are well understood. In particular, it is known that for every Sturmian potential, the associated Schr\"odinger operator has zero-measure spectrum and purely singular continuous spectral measures. We conjecture that the same statements hold in the more general class of pattern Sturmian potentials. We prove partial results in support of this conjecture. In particular, we confirm the conjecture for all pattern Sturmian potentials that belong to the family of Toeplitz sequences.
\end{abstract}

\maketitle

\section{Introduction}\label{s.intro}

The classical Morse-Hedlund theorem states that a two-sided sequence over a finite alphabet is periodic if and only if there exists $n \in \Z_+$ such that $p(n) \le n$. Here $p(n)$ denotes the numbers of subwords of length $n$ of the given sequence. Put differently, the sequence is aperiodic if and only if $p(n) \ge n+1$ for every $n$. Recurrent aperiodic sequences of minimal complexity (i.e., $p(n) = n+1$ for every $n$) are called Sturmian sequences. These sequences play an important role in the study of one-dimensional quasicrystals for obvious reasons (aperiodicity coupled with strong order properties).

From an electronic transport perspective in one-dimensional quasicrystals, it is therefore of interest to study discrete one-dimensional Schr\"odinger operators
\begin{equation}\label{e.oper}
[H \psi](n) = \psi(n+1) + \psi(n-1) + V(n) \psi(n)
\end{equation}
in $\ell^2(\Z)$ with a Sturmian $V$. In order to ensure the self-adjointness of the operators we consider, we will assume throughout this paper that whenever a symbolic sequence serves as the potential of a Schr\"odinger operator, the underlying alphabet is a subset of $\R$.

Our study is motivated by the following theorem.

\begin{theorem}\label{t.sturmian}
Suppose $H$ is a discrete Schr\"odinger operator with a Sturmian potential. Then the spectrum of $H$ has zero Lebesgue measure and its spectral measures are purely singular continuous.
\end{theorem}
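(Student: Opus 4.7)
The plan is to reduce the theorem to three separate assertions: (i) the spectrum $\sigma(H)$ has zero Lebesgue measure, (ii) $H$ has no eigenvalues, and (iii) $H$ has no absolutely continuous spectrum. Observation (iii) is in fact a consequence of (i), since the absolutely continuous spectrum is always contained in the essential support of the density of states, which has zero measure once $\sigma(H)$ does. So only (i) and (ii) require genuine work. A preliminary reduction is to note that the hull of a Sturmian sequence is a minimal, strictly ergodic subshift, so by the standard constancy result for ergodic families of Schr\"odinger operators (Pastur), both $\sigma(H)$ and the absolutely continuous spectrum are independent of the chosen realization, and one may freely replace $V$ by any convenient sequence in its subshift.

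For (i), the approach I would take is the classical trace map analysis of S\"ut\H{o}, Bellissard--Iochum--Scoppola--Testard, and Bovier--Ghez. Write the slope $\alpha$ of the Sturmian sequence via its continued fraction expansion $[a_1,a_2,\dots]$, and produce a sequence of canonical words $s_n$ corresponding to the denominators of the convergents. These words satisfy a Fibonacci-like substitution relation, which lifts to a renormalization recursion on the traces $x_n(E) = \tfrac{1}{2}\mathrm{tr}\, M_{s_n}(E)$ of the transfer matrices over $s_n$. The key algebraic fact is the existence of a conserved quantity (an invariant $I(E)$ depending only on the coupling constant, not on $n$). One then shows that the set
\[
\sigma_\infty = \{E \in \R : (x_n(E))_{n} \text{ stays bounded}\}
\]
coincides with the spectrum (using that periodic approximants converge in Hausdorff distance to $\sigma(H)$, together with a Chebyshev-type criterion identifying $\sigma_n$ with $\{|x_n(E)|\le 1\}$), and that the escape dynamics of the trace map forces this intersection to have zero Lebesgue measure. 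Here the technically delicate point is uniform control of the escape rate along the continued fraction expansion when the partial quotients $a_k$ are unbounded, which requires a careful case analysis of the iteration.

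For (ii), I would use a Gordon-type argument. Sturmian sequences have the property that every prefix $u$ appears as a factor in the configuration $uuu'$ with $u'$ a prefix of $u$ of nearly the same length; more precisely, one can extract, along the sequence of canonical words $s_n$, arbitrarily long patterns of the form $s_n s_n s_n$ centered near the origin (or within a bounded shift of it) in any element of the subshift. This cube condition, combined with the uniform bound on $\|M_{s_n}(E)\|$ for $E \in \sigma(H)$ coming from the boundedness of the trace orbit on the spectrum, feeds directly into Gordon's three-block lemma to rule out $\ell^2$ eigenfunctions, and hence $H$ has empty point spectrum. Together with (i) this forces the spectral measures to be purely singular continuous, supported on a zero-measure set, which is the claim. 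The main obstacle in this program is the trace map analysis in step (i): producing the invariant and extracting uniform escape estimates along an arbitrary continued fraction expansion is where most of the technical effort resides; the Gordon step is comparatively soft once the required combinatorial cube property of Sturmian sequences is in hand.
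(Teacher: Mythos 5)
A point of orientation first: the paper does not prove Theorem~\ref{t.sturmian}; it quotes it as known, attributing the zero-measure property to \cite{BIST89} and the absence of eigenvalues to \cite{DKL00}. So the question is whether your outline is a viable reconstruction of those arguments. Your overall architecture (reduce to zero measure plus empty point spectrum, with absence of absolutely continuous spectrum free of charge; trace-map analysis for the former; Gordon-type analysis for the latter) is the right one, but two steps as you state them would not go through.

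In step (i), the zero-measure conclusion does not come from ``escape dynamics of the trace map forcing the bounded-orbit set to have zero measure.'' The actual chain is: bounded trace orbits on the spectrum give subexponential growth of transfer matrices, hence $\sigma(H) \subseteq \{ E : \gamma(E) = 0\}$, and Kotani's theorem for aperiodic ergodic potentials taking finitely many values \cite{K89} gives $|\{E : \gamma(E) = 0\}| = 0$; the paper itself emphasizes that both known routes to zero-measure spectrum pass through Kotani's work. More seriously, step (ii) as written contains a false claim: it is not true that cubes $s_n s_n s_n$ occur aligned at (or within bounded distance of) the origin in every element of every Sturmian subshift --- the paper explicitly cites \cite{D00} for the fact that cubes alone can never yield eigenvalue exclusion for all phases, and the Fibonacci case ($a_k \equiv 1$) already defeats a pure three-block argument. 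The proof in \cite{DKL00} that covers every irrational $\alpha$ and every phase instead uses the unique partition of each element of the subshift into blocks of type $s_n$ and $t_n$, followed by a case analysis that combines the three-block Gordon lemma (where a square or cube genuinely sits at the origin) with the two-block Gordon lemma, which trades the missing repetition for the trace bound $|h_n(E)| \le 2$ valid on the relevant part of the spectrum. This is precisely the scheme the paper reproduces for Toeplitz words in Proposition~\ref{gordon} and the case analysis of Proposition~\ref{p.contspectrum}; your sketch omits the partition/alignment step, which is where the real work in the eigenvalue exclusion resides.
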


The zero-measure property (which in turn also implies the absence of absolutely continuous spectrum) was shown by Bellissard et al.\ \cite{BIST89} and the absence of eigenvalues (which, together with the previous result, implies purely singular continuous spectrum) was shown by Damanik et al.\ in \cite{DKL00}.

In recent years there has been extensive work on a generalization of Sturmian sequences. Using maximal pattern complexity instead of standard combinatorial complexity, a similar characterization of recurrent aperiodic sequences has been found and those recurrent aperiodic sequences of minimal maximal pattern complexity are called pattern Sturmian sequences; compare, for example, \cite{GKTX06, KRX06, KZ02, KZ02b} and references therein.

The maximal pattern complexity function $p^* = p^*_V$ of a given recurrent two-sided sequence $V$ is defined as follows,
$$
p^*(n) = \sup_\tau \# \{ V(m + \tau(0)) V(m + \tau (1)) \cdots V(m + \tau(n-1) : m \in \Z \},
$$
where the $\sup$ is taken over all $(\tau(0), \tau(1), \ldots, \tau(n-1))$ with $0 = \tau(0) < \tau(1) < \cdots < \tau(n-1)$ and $\tau(j) \in \Z_+$ for $j = 1, \ldots, n-1$. A recurrent two-sided sequence is aperiodic if and only if $p^*(n) \ge 2n$ for every $n$.\footnote{A version of this statement is shown for one-sided sequences in \cite{KZ02}. Namely, a one-sided sequence is eventually periodic if and only if $p^*(n) \le 2n-1$ for some $n$. It is easy to modify the proof to obtain the statement above for recurrent two-sided sequences. Compare, for example, \cite[Proposition~1.2]{KRX06}.} Recurrent aperiodic sequences of minimal maximal pattern complexity (i.e., $p^*(n) = 2n$ for every $n$) are called pattern Sturmian sequences.

It turns out that all Sturmian sequences are pattern Sturmian, but there are many more examples as we will recall below. Thus, it is natural to ask whether the following generalization of Theorem~\ref{t.sturmian} holds.

\begin{conj}\label{c.patternsturmian}
Suppose $H$ is a discrete Schr\"odinger operator in $\ell^2(\Z)$ with a pattern Sturmian potential. Then the spectrum of $H$ has zero Lebesgue measure and its spectral measures are purely singular continuous.
\end{conj}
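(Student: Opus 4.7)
The plan is to split the conjecture into the two standard sub-problems: zero Lebesgue measure of $\sigma(H)$ on the one hand, and absence of eigenvalues on the other. The former already rules out absolutely continuous spectrum, and together the two yield purely singular continuous spectral measures.

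First I would attack the absence of eigenvalues through a Gordon-type argument. The goal is, for each pattern Sturmian potential $V$ and each energy $E$, to produce arbitrarily long windows on which $V$ exhibits a three-fold repetition of a block straddling the origin, so that the standard Cayley--Hamilton computation of Gordon forces any formal solution of $H\psi=E\psi$ to satisfy $\max(|\psi(n_k)|+|\psi(n_k-1)|,|\psi(-n_k)|+|\psi(-n_k-1)|) \ge c(|\psi(0)|+|\psi(-1)|)$ along some subsequence $n_k\to\infty$, precluding $\ell^2$-membership. The combinatorial input should come from the bound $p^*(n)=2n$: such low maximal pattern complexity severely restricts the number of long patterns that $V$ can display and should force a great deal of repetition, which one then has to promote either to honest three-block Gordon repetitions or at least to the cube/square hypotheses of available refinements of Gordon's lemma.

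For the zero-measure statement, the plan is to use periodic approximations. One selects periodic potentials $V_k$ of period $q_k\to\infty$ whose restrictions to $[-N_k,N_k]$ agree with $V$, so that $H_{V_k}\to H_V$ in the strong resolvent sense and therefore $\sigma(H_V)\subseteq \limsup_k \sigma(H_{V_k})$. Since each $\sigma(H_{V_k})$ is a finite union of bands, it suffices to show that the total Lebesgue measure of these bands tends to zero. This is where I expect the main obstacle to sit: in the Sturmian case, the required band estimates come from the trace map driven by continued-fraction dynamics, a structure entirely specific to Sturmian sequences. At the pattern Sturmian level the trace map is unavailable, and one must replace it with a substitute that draws only on $p^*(n)=2n$, e.g.\ a direct Chebyshev/transfer-matrix estimate exploiting the fact that only $O(n)$ distinct length-$n$ patterns can occur along any sampling set $\tau$.

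Pragmatically, I would begin inside the Toeplitz subclass, as announced in the abstract, because there a canonical geometric hierarchy of periodic approximants is built into the definition of the sequence. The Toeplitz hierarchy essentially hands us the three-block repetitions needed for Gordon, and the approximating periods grow in a controlled way, so band-width estimates should be quantitatively accessible. The extension from Toeplitz to arbitrary pattern Sturmian sequences would then require a structural theorem asserting that every pattern Sturmian $V$ admits some analogous hierarchy of local repetitions and periodic approximations; it is precisely the absence of such a structural result, beyond the defining complexity bound $p^*(n)=2n$, that I expect to keep the general case genuinely open and to be the reason that only a conjecture is stated here.
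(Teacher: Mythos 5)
You are attempting to prove a statement that the paper itself labels a \emph{conjecture} and does not prove: the paper only establishes it for the Toeplitz subclass (Theorem~\ref{t.patternsturmian}) and gives partial results for circle map sequences, explicitly stating that attacking the conjecture head-on is ``more or less hopeless'' absent a classification of all pattern Sturmian sequences. Your proposal is a programme rather than a proof, and to your credit you diagnose the missing ingredient correctly --- a structural theorem giving every pattern Sturmian sequence a hierarchy of repetitions and approximants. But one step of your plan is not merely unproven, it is known to fail as stated: the hope that $p^*(n)=2n$ ``should force'' three-block (cube) repetitions straddling the origin, sufficient for the classical Gordon lemma, cannot work uniformly. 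The paper points out that for circle map sequences --- which are pattern Sturmian --- it was shown in \cite{D00} that cubes alone can never exclude eigenvalues for all phases $\theta$, and in \cite{DZ00} that palindromes alone also fail; one is forced into the square-plus-trace-estimate method of \cite{S87}, and the required trace estimates demand an $S$-adic/trace-map structure that the complexity bound alone does not supply. So the eigenvalue half of your plan, run purely from $p^*(n)=2n$, hits a documented obstruction, not just a technical difficulty.

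On the zero-measure half, your route (periodic approximants, strong resolvent convergence, shrinking total band length) is one of the two standard Kotani-based strategies the paper describes, but it is not the one the paper actually uses where it succeeds: for the Toeplitz subclass the paper verifies the Boshernitzan condition (B) and invokes \cite{DL06} to get zero-measure spectrum and uniformity of the cocycles $M^E$ in one stroke (Corollary~\ref{c.zeromeasure}); the uniformity is then also what feeds the trace characterization $\Sigma=\bigcap_n \sigma_n\cup\sigma_{n+1}$ used in the Gordon argument. The paper also records that the Boshernitzan route is known to fail for some circle map parameters, which is precisely why the full conjecture remains open. Your instinct to start with the Toeplitz case, where the hierarchy $s_k, t_k$ supplies the squares and cubes, matches what the paper actually does in Propositions~\ref{partition}--\ref{p.contspectrum}; but as a proof of the conjecture as stated, the proposal has a genuine gap at the point where the general pattern Sturmian structure is needed, and no argument currently bridges it.
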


Since the theory of pattern Sturmian sequences is usually discussed in the setting of one-sided sequences, let us consider this case now. The maximal pattern complexity function of a one-sided sequences $V$ (defined on $\Z_+$) is given by
$$
p^*(n) = \sup_\tau \# \{ V(m + \tau(0)) V(m + \tau (1)) \cdots V(m + \tau(n-1) : m \in \Z_+ \}
$$
and the fundamental result is now that $V$ is eventually periodic if and only if $p^*(n) \le 2n-1$ for some $n$; see \cite{KZ02}. A one-sided sequence with $p^*(n) = 2n$ for every $n \in \Z_+$ is called pattern Sturmian. It is important to note that not all pattern Sturmian sequences are recurrent (while it is well known that one-sided sequences of minimal complexity $p(n) = n+1$ are recurrent, as they are given precisely by the half-line restrictions of the two-sided Sturmian sequences).

The Schr\"odinger operator in $\ell^2(\Z_+)$ associated with a one-sided sequence $V$ acts as in \eqref{e.oper} for $n \in \Z_+$, and one imposes a boundary condition of the form $\psi(0) \sin \phi + \psi(1) \cos \phi = 0$, which in turn makes the operator self-adjoint. Sometimes one makes the dependence of the operator on the boundary condition explicit by writing $H_\phi$ instead of $H$.

Unlike in the classical case, there is as yet no complete classification of all pattern Sturmian sequences. Instead, the existing papers have focused on identifying suitable subclasses of pattern Sturmian sequences.

\begin{exam}\label{example}
Here are the known classes of pattern Sturmian sequences:

\begin{enumerate}

\item \textit{Circle map sequences}: A circle map sequence is one of the form $V(n) = \lambda \chi_{[1-\beta, 1)}(n \alpha + \theta \!\!\! \mod 1)$, where $\lambda \not= 0$, $\alpha \in (0,1)$ is irrational, $\beta \in (0,1)$, and $\theta \in [0,1)$. Such a sequences is Sturmian if and only if $\beta = \alpha$. Circle map sequences were shown to be pattern Sturmian by Kamae and Zamboni in \cite{KZ02}. All circle map sequences are recurrent and can be considered in both the one-sided and two-sided settings.

\item \textit{Toeplitz sequences}: Suppose $\mathcal{A}$ is a finite alphabet and $?$ is a letter that does not belong to $\mathcal{A}$. Denote by $\mathcal{P}(\mathcal{A}, ?)$ the set of periodic sequences $\xi = \eta^\infty$ (one-sided or two-sided, depending on the setting) for which every symbol in $\mathcal{A}$ occurs as least once in $\eta$, while $?$ occurs exactly once in $\eta$. If $\xi^1 \in \mathcal{P}(\mathcal{A}, ?)$ and $\xi^2 \in \mathcal{P}(\mathcal{B}, ?)$, $\xi^1 \lhd \xi^2$ is obtained by substituting $?$ by $\ldots, \xi^2(j-1), \xi^2(j) \xi^2(j+1), \ldots$, in that order. Then $\xi^1 \lhd \xi^2 \in \mathcal{P}(\mathcal{A} \cup \mathcal{B}, ?)$. Iterating this procedure indefinitely we obtain a so-called Toeplitz sequence over the alphabet given by the union of the alphabets used in the process, assuming it is finite. The words $\xi^1, \xi^2, \ldots$ used in the iterative procedure are called the coding words of the given Toeplitz sequence. All Toeplitz sequences are recurrent and can be considered in both the one-sided and two-sided settings. A Toeplitz sequence is called simple if it is defined over a $2$-letter alphabet, say $\mathcal{A} = \{ a, b \}$, and it has a sequence of coding words $\xi^1, \xi^2, \ldots  \in \mathcal{P}(\{a\}, ?) \cup \mathcal{P}(\{b\}, ?)$. Gjini et al.\ showed in \cite{GKTX06} that every simple Toeplitz sequence is pattern Sturmian. More precisely, they gave a characterization of all Toeplitz sequences that are pattern Sturmian; see \cite{GKTX06} for details. For our purposes it is sufficient to note that the results of \cite{GKTX06} imply that every pattern Sturmian Toeplitz sequence must be of the form $\eta^\infty \lhd \xi$, where $\eta$ is a finite word containing exactly one $?$ and $\xi$ is a simple Toeplitz sequence.

\item \textit{Sparse sequences}: Consider the one-sided sparse sequence
$$
V(n) = \begin{cases} v & n = n_k \text{ for some } k, \\ 0 & \text{otherwise}, \end{cases}
$$
where $n_{k+1} > 2 n_k \ge 1$. This sequence is pattern Sturmian; compare \cite[Example~5]{KZ02}. Sparse sequences are not recurrent and are naturally considered in the one-sided setting.

\end{enumerate}

It would certainly be of interest to find additional examples or to establish limitations on existing ones beyond the known ones (compare, e.g., \cite{KRTX06}).
\end{exam}

The third class of examples shows that we cannot expect a statement like Conjecture~\ref{c.patternsturmian} to hold for one-sided pattern Sturmian potentials. Indeed for the sparse potentials from that class of examples, it is easy to see that the spectrum of the associated Schr\"odinger operator will contain the interval $[-2,2]$ and hence will not have zero Lebesgue measure. Moreover it is also easy to see (and will be explained in more detail below) that the spectrum outside $[-2,2]$ is non-empty and pure point, and hence the operators will not have purely singular continuous spectrum. We therefore can only formulate the following modest conjecture in the one-sided case.

\begin{conj}\label{c.patternsturmian1sided}
Suppose $H_\phi$ is a discrete Schr\"odinger operator in $\ell^2(\Z_+)$ with a pattern Sturmian potential and boundary condition $\phi$. Then the absolutely continuous spectrum of $H$ is empty and its singular continuous spectrum is non-empty.
\end{conj}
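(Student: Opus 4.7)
The plan is to reduce both claims to combinatorial features of the orbit-closure subshift $\Omega_V$ generated by $V$ (and its two-sided extensions). The one input needed is that every pattern Sturmian $V$ obeys the trivial bound $p(n) \le p^*(n) = 2n$, so $\Omega_V$ has at most linearly growing subword complexity.

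For the emptiness of absolutely continuous spectrum I would appeal to Remling's theorem on the structure of right-limits: if $\Sigma_{\rm ac}(H_\phi)$ were non-empty, then every right-limit of $V$ would extend to a whole-line operator that is reflectionless on $\Sigma_{\rm ac}(H_\phi)$. Since right-limits of $V$ lie in $\Omega_V$, one is reduced to excluding a non-trivial reflectionless family of two-sided operators with potentials drawn from a linear-complexity subshift. A more direct route, whenever $\Omega_V$ supports a shift-invariant measure, is to verify Boshernitzan's condition $\limsup_n n \min_w \mu([w]) > 0$ and conclude absence of absolutely continuous spectrum for every potential in the topological support, in particular for $H_\phi$. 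For the sparse subclass this reduces to the already-understood situation in which $\Omega_V$ collapses to the trivial orbit of the zero sequence and the a.c.\ component is absent by standard arguments.

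To produce non-empty singular continuous spectrum I would first observe that $\sigma_{\rm ess}(H_\phi)$ is non-empty: any right-limit of $V$ contributes to it, and in the sparse case it in fact contains $[-2,2]$. The remaining task is to rule out eigenvalues on a subset of $\sigma_{\rm ess}(H_\phi)$ carrying positive spectral measure. The natural tool is a Gordon-type three-block argument: if $V$ eventually contains arbitrarily long cubes $www$, then every formal eigenfunction fails to lie in $\ell^2$ at the energies where the transfer matrix over $w$ stays bounded. Linear subword complexity, coupled with recurrence along the tail, typically forces such cube repetitions at infinitely many scales, and the resulting set of energies supports the desired singular continuous component once the a.c.\ part has been excluded.

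The hard part, and the reason the full conjecture remains open, is the absence of a structural classification of pattern Sturmian one-sided sequences. Neither Boshernitzan's condition nor the Gordon cube condition is a formal consequence of $p^*(n) = 2n$; both have to be checked combinatorially, and at present this is only possible within the known subclasses (circle maps, Toeplitz, sparse). Consequently the general conjecture has to be attacked subclass by subclass, which is precisely what the present paper does for the Toeplitz subclass via an explicit trace map and subordinacy analysis built on the coding-word structure.
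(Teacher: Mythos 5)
You have set out to prove a statement that the paper itself only poses as a conjecture: there is no proof of Conjecture~\ref{c.patternsturmian1sided} in the paper to compare against, and your own text concedes at the end that the argument cannot be completed in general. So what you have written is a research program, not a proof, and it should be judged as such. One part of it is genuinely salvageable: the emptiness of the absolutely continuous spectrum does follow in full generality from Remling's Theorem~1.1 in \cite{R11}, because a pattern Sturmian sequence takes finitely many values and, by the Kamae--Zamboni criterion ($p^*(n)\le 2n-1$ for some $n$ iff eventually periodic), is not eventually periodic; this is exactly the route the paper takes for the sparse subclass, and it works uniformly in the boundary condition $\phi$. You could have stated and proved that half as a theorem rather than leaving it inside a conditional program; the detour through reflectionless right limits and Boshernitzan's condition is unnecessary for this half.

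The second half is where the genuine gap lies, and it is not merely a matter of unchecked hypotheses. Your key step --- ``linear subword complexity, coupled with recurrence along the tail, typically forces such cube repetitions at infinitely many scales'' --- is false as a general principle: there are sequences of linear complexity containing no cubes at all (and over three letters even no squares), so $p^*(n)=2n$ cannot by itself deliver the Gordon three-block structure. Moreover, a Gordon argument needs not only the combinatorial repetition but also control of the traces or norms of the transfer matrices over the repeated block, which is precisely what requires the $S$-adic/trace-map machinery the paper builds for the Toeplitz subclass. Finally, the sparse subclass already shows that the singular continuous half of the conjecture is delicate even where the combinatorics is completely explicit: the paper excludes eigenvalues in $(-2,2)$ only under additional growth conditions on the gaps $n_{k+1}-n_k$ (via Simon--Stolz, not Gordon), and it does not claim the conjecture for all sparse examples. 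So your proposal correctly identifies the obstruction --- the absence of a structural classification of pattern Sturmian sequences --- but it does not close it, and the intermediate heuristic about cubes would fail if taken literally.
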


Again, as the examples above show, the spectrum may or may not be of zero Lebesgue measure, and additional point spectrum may be present.

Given that the class of all pattern Sturmian sequences is not yet fully understood (and the classification of all Sturmian sequences is crucial to the proof of Theorem~\ref{t.sturmian}), it is more or less hopeless to attack Conjectures~\ref{c.patternsturmian} and \ref{c.patternsturmian1sided} head-on. Rather we will prove the claims in these conjectures for as many of the known examples of pattern Sturmian sequences as possible.

Specifically we will devote one section to each of the three classes of examples of pattern Sturmian sequences mentioned above. We discuss circle map sequences in Section~\ref{s.circlemap}, pattern Sturmian Toeplitz sequences in Section~\ref{s.toeplitz}, and sparse pattern Sturmian sequences in Section~\ref{s.sparse}.

\section{Circle Map Sequences}\label{s.circlemap}

In this section we discuss circle map sequences, the first class mentioned in Example~\ref{example}. Within this class we are unfortunately still far away from fully establishing Conjecture~\ref{c.patternsturmian}. We begin by summarizing the known partial results and then proceed to discuss how additional progress may be made.

The following result on zero-measure spectrum was established in \cite{DL06b}.

\begin{theorem}\label{t.cmzms}
For a discrete Schr\"odinger operator $H$ in $\ell^2(\Z)$ with a potential given by a circle map sequence corresponding to the parameters parameters $\lambda \not= 0$, $\alpha$ irrational, $\beta \in (0,1)$, and $\theta$, zero-measure spectrum holds in the following range of parameters:
\begin{itemize}

\item every $\lambda \not= 0$, every irrational $\alpha$, every $\beta \in \left( \Z \alpha + \Z \right) \cap (0,1)$, every $\theta$;

\item every $\lambda \not= 0$, every irrational $\alpha$, Lebesgue almost every $\beta \in (0,1)$, every $\theta$;

\item every $\lambda \not= 0$, every irrational $\alpha$ that is of bounded type, every $\beta \in (0,1)$, every $\theta$.

\end{itemize}
\end{theorem}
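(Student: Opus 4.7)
The plan is to reduce zero-measure spectrum to a combinatorial/dynamical condition on the subshift generated by the circle map potential, and then verify this condition for each of the three parameter regimes using the three-distance theorem and Diophantine approximation. The main tool would be the theorem of Lenz (building on earlier work of Boshernitzan), which asserts that if a minimal subshift $\Omega$ satisfies condition (B) --- that is, there exist $\eta>0$ and a sequence $n_k \to \infty$ such that every word of length $n_k$ occurring in $\Omega$ has frequency at least $\eta/n_k$ --- then every Schr\"odinger operator with a potential taking values in $\Omega$ has spectrum of zero Lebesgue measure. With this reduction in hand, the entire problem becomes one about lengths of intervals in a rotation coding.

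First I would identify the subshift $\Omega_{\alpha,\beta}$ generated by a circle map sequence as the set of codings of the rotation $T_\alpha \colon x \mapsto x + \alpha \bmod 1$ relative to the two-set partition $\{[0,1-\beta),\,[1-\beta,1)\}$. By minimality and unique ergodicity of the rotation, the frequency of a word $w_0 \cdots w_{n-1}$ equals the Lebesgue measure of the cylinder $\bigcap_{j=0}^{n-1} T_\alpha^{-j} I_{w_j}$. As $w$ ranges over all admissible words of length $n$, these cylinders partition $[0,1)$ into intervals whose endpoints lie in
$$
P_n = \{-j\alpha \bmod 1 : 0 \le j < n\} \;\cup\; \{1-\beta - j\alpha \bmod 1 : 0 \le j < n\}.
$$
Condition (B) then translates into the assertion that for some fixed $\eta>0$ the minimum gap between consecutive points of $P_n$ on the torus is at least $\eta/n$ for infinitely many $n$.

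The second half of the proof is a case analysis. In the first regime, writing $1-\beta \equiv -m\alpha \bmod 1$ embeds $P_n$ into a single rotation orbit $\{-k\alpha : -m \le k < n\}$, and the classical three-distance theorem, applied with $n$ chosen so that $n+|m|$ equals a continued-fraction denominator $q_k$ of $\alpha$, gives minimum gap of order $\|q_{k-1}\alpha\| \gtrsim 1/q_k$, yielding (B) with $\eta$ a small absolute constant. In the third regime, where $\alpha$ has uniformly bounded partial quotients, the three-distance theorem applied to each of the two arithmetic progressions inside $P_n$ separately gives minimum gaps of order $1/n$ with a constant depending only on $\sup_k a_k$; since overlaying the shifted copy can at worst subdivide each gap, (B) again follows. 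The second regime requires a Borel--Cantelli argument: for Lebesgue a.e.\ $\beta$, the quantity $\|(1-\beta) - k\alpha\|$ stays above $\eta/k^{1+\varepsilon}$ for all large $k$, which keeps the two orbits $\{-j\alpha\}$ and $\{1-\beta-j\alpha\}$ transverse enough that their union still obeys (B) along a subsequence of scales.

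The main obstacle is the almost-every-$\beta$ case. In the other two regimes one has either an exact algebraic relation collapsing $P_n$ to a single orbit, or a uniform Diophantine control on $\alpha$ that directly yields gap estimates; neither is available here. One must therefore extract the correct Diophantine condition on the pair $(\alpha,\beta)$ and verify a quantitative transversality between two rotation orbits through a delicate measure estimate on shrinking neighborhoods, a step that is sensitive to how one chooses the subsequence $n_k$ along which (B) is asserted.
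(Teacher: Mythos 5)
The paper offers no proof of Theorem~\ref{t.cmzms}; it is quoted from \cite{DL06b}, and the surrounding discussion makes clear that the proof there runs exactly along the lines you propose: verify the Boshernitzan condition (B) for the circle-map subshift and invoke \cite{DL06} together with Kotani theory \cite{K89}. So your reduction, your identification of word frequencies with lengths of the intervals cut out by $P_n = \{-j\alpha\} \cup \{1-\beta-j\alpha\}$, and your treatment of the first regime are all on target. Two of your three verifications of (B), however, are not sound as written.

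The serious gap is in the bounded-type case. You assert that since each of the two arithmetic progressions in $P_n$ is $c/n$-separated by the three-distance theorem, and ``overlaying the shifted copy can at worst subdivide each gap,'' condition (B) follows. Subdividing a gap is exactly what can ruin the bound: if some point $1-\beta-j'\alpha$ lands at distance $\varepsilon \ll 1/n$ from some $-j\alpha$, then $P_n$ has a gap of length $\varepsilon$, the corresponding cylinder has measure $\varepsilon$, and (B) fails at that scale. The cross-distances $\|(1-\beta)-k\alpha\|$ for $0<|k|<n$ must be controlled by a separate argument. One that works: take $n=q_m$ (continued-fraction denominators) and suppose that for every large $m$ there is $k_m$ with $|k_m|<q_m$ and $\|(1-\beta)-k_m\alpha\|<\varepsilon/q_m$. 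Then $\|(k_m-k_{m+1})\alpha\|<2\varepsilon/q_m$ while $|k_m-k_{m+1}|\le 2q_{m+1}\le C(M)\,q_m$ for bounded type, and since $\|j\alpha\|\ge c(M)/q_m$ for all $0<|j|\le C(M)q_m$, choosing $\varepsilon$ small forces $k_m=k_{m+1}$ eventually, hence $1-\beta\in\Z\alpha+\Z$, which returns you to the first regime; otherwise $\limsup_m q_m\min_{|k|<q_m}\|(1-\beta)-k\alpha\|>0$ and (B) holds. A milder but real defect affects the almost-every-$\beta$ case: a bound of the form $\|(1-\beta)-k\alpha\|\ge\eta/k^{1+\varepsilon}$ for all large $k$ only yields gaps of order $n^{-1-\varepsilon}$, which does not give (B). What you need, and what a direct measure estimate at each scale supplies without any Borel--Cantelli input (the set of $\beta$ with $\min_{|k|<q_m}\|(1-\beta)-k\alpha\|<\eta/q_m$ has measure at most $5\eta$ uniformly in $m$, so the set of $\beta$ bad at \emph{all} large scales has measure at most $5\eta$, and $\eta\downarrow 0$ finishes), is that for a.e.\ $\beta$ the lower bound $\eta/q_m$ holds for infinitely many $m$.
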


Of course $\lambda \not= 0$ and $\alpha$ irrational are necessary conditions for zero-measure spectrum (as the potential is periodic otherwise and hence the spectrum is given by a finite union of non-degenerate compact intervals), and the spectrum in this case is actually independent of $\theta$. Thus, the remaining question is whether we can improve for given irrational $\alpha$ the full-measure condition on $\beta$ to all possible $\beta$'s. It is known that the method (developed in \cite{DL06}) used to prove zero-measure spectrum in the parameter range described above will fail, for any given irrational $\alpha$ that is not of bounded type, for some $\beta$; see \cite{DL06b}. Thus, new ideas are required to establish zero-measure spectrum for all circle map sequences.

Generally speaking, there are two fundamental ways of proving zero-measure spectrum as a consequence of Kotani's seminal work \cite{K89}: the identification of $S$-adic structures in the potential and a trace-map analysis based on these structures, and the verification of the so-called Boshernitzan condition, which in turn implies the absence of non-uniform hyperbolicity for the associated Schr\"odinger cocycles. The discussion in the previous paragraph is implicitly centered around the approach using the Boshernitzan condition. It has been verified in the generality described in Theorem~\ref{t.cmzms}, and it is known to not always hold as mentioned above. Thus, one either has to come up with a completely new way of proving zero-measure spectrum, or one has to somehow make the approach based on $S$-adic structures and trace maps work.

Let us discuss how the latter way might be carried out. It is well known that circle map sequences have complexity $p(n) = 2n$ when $\beta \not\in \Z + \Z \alpha$ (otherwise they are quasi-Sturmian and the complexity is $p(n) = n+k$); compare \cite{R94}. Moreover, a linearly bounded complexity function ensures that weak $S$-adic structures are present \cite{F96}. That is, there is a finite set of substitutions such that suitable iterations of these substitutions generate all the subwords of the given sequence. Any substitution generates an associated trace map. Thus, one could in principle hope that an analysis such as the one pioneered in \cite{S87, S89} and developed further, for example, in \cite{BIST89, BG93, LTWW02} can be carried out. While this may seem like a daunting task, we want to point out that the results obtained in \cite{J91} strongly hint at the fact that this undertaking is manageable and may produce the desired results.

Let us turn to the spectral type.

\begin{theorem}
Consider a discrete Schr\"odinger operator $H$ in $\ell^2(\Z)$ with a potential given by a circle map sequence corresponding to the parameters $\lambda \not= 0$, $\alpha$ irrational, $\beta \in (0,1)$, and $\theta$. Then, the absolutely continuous spectrum of $H$ is empty. Absence of point spectrum holds in the following range of parameters:
\begin{itemize}

\item every $\lambda \not= 0$, every $\alpha$ irrational, $\beta \in \left( \Z \alpha + \Z \right) \cap (0,1)$, every $\theta$;

\item every $\lambda \not= 0$, every $\beta \in (0,1)$, Lebesgue almost every $\alpha$, Lebesgue almost every $\theta$.

\end{itemize}
\end{theorem}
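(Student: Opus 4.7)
The plan splits according to the three claims in the statement. For empty absolutely continuous spectrum I would invoke Kotani's theorem for ergodic operators with finitely valued potentials: a circle map sequence arises as the sampling of a two-valued arc indicator along the orbit of the rotation by $\alpha$ on the circle, which is an ergodic continuous dynamical system of exactly the kind Kotani treats, and since $\alpha$ is irrational the sequence is aperiodic. Kotani's dichotomy for finite-valued ergodic potentials then immediately yields $\sigma_{\mathrm{ac}}(H) = \emptyset$ for every $\lambda \neq 0$, every irrational $\alpha$, every $\beta \in (0,1)$, and every $\theta$, with no restriction whatsoever.

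For the first bullet on absence of point spectrum the key is that when $\beta \in \Z\alpha + \Z$ the circle map sequence is quasi-Sturmian: it has complexity $p(n) = n + k$ for some constant $k$ and, by Cassaigne's theorem, can be written as $\psi(s)$ for a Sturmian sequence $s$ of slope $\alpha$ and a suitable non-erasing letter-to-word morphism $\psi$. Through $\psi$ the hierarchical and palindromic structure of $s$ — the very structure that drives the argument of \cite{DKL00} — is inherited by the circle map sequence, uniformly in $\theta$. The approach is therefore to transfer that argument: produce enough return-word and palindromic repetitions in the subshift to invoke a Gordon-type three-block lemma at any candidate eigenvalue, forcing a non-trivial formal eigenfunction to fail the square-summability requirement.

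For the second bullet I would run a Gordon-type argument under Diophantine conditions on $\alpha$ and genericity of $\theta$. By the three-distance theorem, the itinerary of $n\alpha + \theta$ relative to the arc $[1-\beta,1)$ inherits the combinatorial repetitions present in the symbolic coding of the $\alpha$-rotation. For Lebesgue a.e.\ $\alpha$ the partial quotients of the continued fraction expansion of $\alpha$ grow fast enough along a subsequence that each convergent produces an arbitrarily long approximate three-block repetition in the sampled sequence, as long as the starting phase $\theta$ avoids a measure-zero exceptional set determined by the forward and backward rotation orbits of the two endpoints $1-\beta$ and $0$ of the coding arc. For a.e.\ such $\theta$ the error in the three-block comparison is controlled uniformly, and the resulting Gordon three-block estimate excludes $\ell^2$ eigenfunctions for every $\lambda \neq 0$ and every $\beta$.

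The principal obstacle is sharpening the two bullets to cover all parameters. Removing the a.e.\ conditions in the second bullet is genuinely hard: for $\alpha$ of bounded type paired with generic $\beta \notin \Z\alpha + \Z$, the strong combinatorial repetitions needed for Gordon's lemma fail, and one is forced into the $S$-adic/trace-map analysis alluded to after Theorem~\ref{t.cmzms}. The $\theta$ exceptional set is similarly tied to the orbits of the discontinuities of the arc coding, and controlling it uniformly appears to require a hierarchical description of the subshift that is presently unavailable outside the quasi-Sturmian locus. I expect that any complete proof will have to be coordinated with a resolution of the analogous obstacle for the zero-measure spectrum question.
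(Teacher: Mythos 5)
Your overall plan matches the paper's, which establishes this theorem purely by citation: absence of absolutely continuous spectrum from Kotani theory \cite{K89} combined with \cite{K97} or \cite{LS99} (or alternatively from \cite{R11}), the first bullet from the quasi-Sturmian analysis of \cite{DL03}, and the second bullet from \cite{DP86}. However, two steps in your reconstruction have genuine gaps. First, Kotani's theorem for finitely valued aperiodic ergodic potentials yields $\sigma_{\mathrm{ac}}(H_\theta) = \emptyset$ only for Lebesgue almost every $\theta$; the claim is for \emph{every} $\theta$, and upgrading from almost every to all requires an additional input --- either the constancy of the absolutely continuous spectrum across the (minimal) hull, via \cite{K97} or \cite{LS99}, or Remling's oracle theorem \cite{R11}. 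Your phrase ``immediately yields \dots for every $\theta$, with no restriction whatsoever'' skips exactly the step the paper is careful to cite.

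Second, for the first bullet the conclusion is again for \emph{every} $\theta$, and the paper's own discussion following the theorem explains why the tools you propose cannot deliver that: by \cite{DZ00} the palindrome method alone fails for some $\theta$ in many cases, and by \cite{D00} the cube (three-block Gordon) method alone also fails for some $\theta$. The proof in \cite{DL03} instead rests on the two-block (square) Gordon criterion combined with trace estimates on the spectrum (traces of transfer matrices over the basic blocks bounded by $2$ in modulus), in the spirit of \cite{S87} and \cite{DKL00}. So your appeal to ``palindromic repetitions'' and a ``three-block lemma'' would at best reproduce an almost-every-$\theta$ result, not the stated all-$\theta$ result. Your sketch of the second bullet, where only almost every $\alpha$ and almost every $\theta$ are claimed, is consistent with the cube-based argument of \cite{DP86} and is fine.
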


The absence of absolutely continuous spectrum follows from \cite{K89}, combined with \cite{K97} or \cite{LS99}, or alternatively from \cite{R11}. The first result on the absence of point spectrum was shown in \cite{DL03} and the second was shown in \cite{DP86}.

As we see, the absence of absolutely continuous spectrum is known in full generality. Thus, the issue is to exclude eigenvalues also in full generality. There are three fundamental ways of excluding eigenvalues: using palindromes \cite{HKS95} , squares (along with trace estimates) \cite{S87}, or cubes \cite{DP86}. It was shown in \cite{DZ00} that in many cases palindromes alone can never yield results for all $\theta$, while it was shown in \cite{D00} that cubes alone can never yield results for all $\theta$. Thus, one is more or less forced to include the method based on squares and trace estimates in one's considerations, and this is indeed how all the known results that hold for all $\theta$ have been obtained.

This in turn makes a connection to what was said above. The required trace estimates are usually obtained via a trace map analysis. Thus, for both the zero-measure property and the spectral type, to fully establish Conjecture~\ref{c.patternsturmian} for potentials given by circle map sequences, it seems it will be crucial to make the approach based on $S$-adic structures and trace map analysis work.

\section{Pattern Sturmian Toeplitz Sequences}\label{s.toeplitz}

In this section we discuss pattern Sturmian Toeplitz sequences, the second class mentioned in Example~\ref{example}. Within this class we are able to fully establish Conjecture~\ref{c.patternsturmian}:

\begin{theorem}\label{t.patternsturmian}
Suppose $H$ is a discrete Schr\"odinger operator in $\ell^2(\Z)$ with a potential given by a pattern Sturmian Toeplitz sequence. Then the spectrum of $H$ has zero Lebesgue measure and its spectral measures are purely singular continuous.
\end{theorem}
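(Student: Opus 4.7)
The plan is to apply the structure theorem of \cite{GKTX06} recalled in Example~\ref{example}(2): every pattern Sturmian Toeplitz sequence has the form $V = \eta^\infty \lhd \xi$ with $\eta$ a finite word containing exactly one $?$ and $\xi$ a simple Toeplitz sequence over a two-letter alphabet $\{a,b\}$. Writing $p := |\eta|$ and $r$ for the position of $?$ in $\eta$, the sequence $V$ is periodic of period $p$ outside the arithmetic progression $r + p \Z_+$, and matches $\xi$ on that progression. The strategy is two-tiered: first handle the simple Toeplitz case $V = \xi$ by a trace-map renormalization adapted to the Toeplitz hierarchy, then extend to the general case by absorbing the periodic $\eta$-frame into the transfer matrix recursion.

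For the simple Toeplitz case, let the coding words be $\xi^k \in \mathcal{P}(\{a_k\},?)$ with $|\xi^k| = p_k$ and $a_k \in \{a,b\}$, and set $q_k := p_1 \cdots p_k$. The self-similarity of $\xi$ shows that its block of length $q_{k+1}$ is a concatenation of $p_{k+1}$ copies of its block of length $q_k$, all identical except at one prescribed copy where the next level of the hierarchy takes over. This yields a multiplicative recursion for the transfer matrix $M_k(E)$ over the block of length $q_k$: $M_{k+1}(E)$ is a product of $p_{k+1}-1$ copies of $A(E-a_{k+1})^{q_k}$ with a single insertion of $M_k(E)$. Taking half-traces and using standard $2\times 2$-matrix identities compresses this into an $S$-adic trace map $x_{k+1} = P_{k+1}(x_k, E)$, where $P_{k+1}$ is drawn from a finite set of polynomials determined by $a_{k+1}$, $p_{k+1}$, and the insertion position.

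The trace map yields both spectral conclusions. Zero-measure spectrum follows by the Bellissard--Iochum--Scoppola--Testard / Bovier--Ghez scheme: the pre-spectra $\Sigma_k := \{E : |x_j(E)| \le C \text{ for all } j \le k\}$ are decreasing finite unions of intervals whose total length decays exponentially, since each $P_{k+1}$ has degree at least two in $x_k$ on the relevant region. Absence of eigenvalues follows from Gordon's three-block lemma: because every coding word of $\xi$ consists of a single repeated letter together with one $?$, the Toeplitz hierarchy produces arbitrarily large factor cubes $W_k W_k W_k$ at positions of bounded distance from the origin, and the Gordon inequality then excludes $\ell^2$-eigenfunctions. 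Extending to $V = \eta^\infty \lhd \xi$ is a matter of bookkeeping: the transfer matrix of $V$ over a block of length $p q_k$ decomposes in the same hierarchical fashion, with each symbol of $\xi$ replaced by a product of $p$ single-step transfer matrices along the letters of $\eta$, so the above trace-map and Gordon arguments carry over after absorbing a finite amount of periodic data.

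The main obstacle is the trace-map induction itself. Two points demand attention. First, one must establish the non-degeneracy required by the BIST scheme: the constant-potential matrices $A(E-a_{k+1})^{q_k}$ (and their $\eta$-analogues) must not systematically degenerate to $\pm I$ across many consecutive levels, since that would stall the escape of trace orbits at certain energies and could leave a positive-measure residual set where the recursion fails to contract. Second, the Gordon cubes must be located near the origin and shown to survive transport through the $\eta$-frame. Neither issue is fundamentally new relative to the classical Sturmian or substitution-generated cases, but the interplay between the two-layer hierarchy (simple Toeplitz within a periodic word) calls for a careful case analysis, which is where the bulk of the work will lie.
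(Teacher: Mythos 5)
Your overall architecture (reduce to the simple Toeplitz core via the structure theorem of \cite{GKTX06}, set up a hierarchical transfer-matrix recursion, absorb the periodic frame $\eta$ into the level-zero block) matches the paper's, but both of your key steps have genuine gaps. For zero-measure spectrum, the assertion that the sets $\Sigma_k = \{E : |x_j(E)| \le C, \ j \le k\}$ have total length decaying exponentially ``since each $P_{k+1}$ has degree at least two in $x_k$'' is not an argument: the degree of a trace polynomial does not control the Lebesgue measure of $\{|x_{k+1}|\le C\}$ without lower bounds on $|dx_{k+1}/dE|$ on the bands, and here the Chebyshev-type factors $S_{n_k}$ have unbounded degree when $\{n_k\}$ is unbounded, which is exactly the hard regime. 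The classical Bellissard--Iochum--Scoppola--Testard scheme you invoke does not in fact prove zero measure by band-length estimates; it proves $\sigma(H) \subset \{E : \gamma(E)=0\}$ via bounded trace orbits and then quotes Kotani's theorem \cite{K89}, a step you never mention. The paper sidesteps all of this by verifying the Boshernitzan condition (B) for $(\Omega_\beta,T)$ (the verification reduces to \cite[Proposition 4.1]{LQ11} because $\beta=\beta^{(\varpi,n,l)}\lhd\tilde\beta$ over a two-letter alphabet) and citing \cite{DL06}, which yields both zero-measure spectrum and the uniformity of the cocycles $M^E$ --- and that uniformity is then essential input for the trace characterization of the spectrum used in the eigenvalue argument. (A minor slip in the same passage: $M_{k+1}$ is a product of $p_{k+1}$ copies of the \emph{level-$k$ block matrix}, all but one with the hole filled by $a_{k+1}$, not of powers $A(E-a_{k+1})^{q_k}$ of a one-step matrix.)

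For absence of eigenvalues, relying on the three-block Gordon lemma alone does not work. The level-$k$ building blocks $s_k$ and $t_k$ agree except in their last symbol, and the unique decomposition $\cdots t_k\, s_k^{\tau_j}\, t_k\, s_k^{\tau_{j+1}}\cdots$ places the origin, for some $\omega$ and some levels, in configurations such as $t_k\,\hat{s}_k\, s_k$, where no exact cube $WWW$ of period $\ell_k$ sits in the position Gordon's lemma requires; the single mismatched symbol destroys the repetition. This is precisely why the paper proves the trace characterization $\Sigma=\bigcap_n(\sigma_n\cup\sigma_{n+1})$ first: it guarantees that for every $E$ in the spectrum at least one of $|h_k(E)|,|h_{k+1}(E)|$ is $\le 2$, so that the \emph{square-based} Gordon criterion (Proposition~\ref{gordon}(b)) can be applied at whichever level the trace is small, and the cube-based criterion is reserved for the configurations $s_k\,\hat{s}_k\,s_k$ where it genuinely applies. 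You flag the location of cubes near the origin as an ``obstacle'' but offer no mechanism to overcome it, and the paper's own discussion in Section~\ref{s.circlemap} (citing \cite{D00}) indicates that cube-only arguments are known to fail for all phases in closely related models. Without the trace dichotomy and the accompanying case analysis over the $s_k/t_k$ partitions, the eigenvalue exclusion does not close.
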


The result is known in some cases. Liu and Qu studied simple Toeplitz sequences in \cite{LQ11} and established zero-measure spectrum for all of them and purely singular continuous spectrum for many of them. Here we take their analysis further and prove the two statements for the entire class of pattern Sturmian Toeplitz sequences.

The well-known period doubling substitution sequence (or rather each of the elements of the subshift it generates) is a simple Toeplitz sequence. For this model, purely singular continuous spectrum for all elements of the subshift was shown in \cite{D01}. Our proof of singular continuous spectrum for all pattern Sturmian Toeplitz sequences is inspired by this proof.

A central component of the proof is the study of the traces of the transfer matrices over the basic building blocks of the given Toeplitz sequence. We begin by proving a characterization of the energies in the spectrum in terms of the behavior of these traces; see Proposition~\ref{p.tracesandspectrum} below.

\subsection{Uniform Convergence of Cocycles}

$(X,T) $ is called a {\it topological dynamical system} ({\it TDS}) if $X$ is a compact metric space and $T : X \to X$ is a homeomorphism. A TDS $(X,T)$ is called {\it minimal} if every orbit is dense; it is called {\it uniquely ergodic} if there is only one $T$-invariant probability measure on $X$. It is called {\it strictly ergodic} if it is both minimal and uniquely ergodic.

Let ${\rm SL}(2,\R)$ be the group of real valued $2 \times 2$ matrices with determinant equal to one, equipped with the topology induced by the standard matrix norm on square matrices.

For a continuous function $A : X \to {\rm SL}(2,\R)$, $x \in X$, and
$n\in\Z$, we define the cocycle $A(n,x)$ by
$$
A(n,x)=
\begin{cases}
A(T^{n-1}x)\cdots A(x); & n>0\\
{\rm Id};& n=0\\
A^{-1}(T^{n}x)\cdots A^{-1}(T^{-1}x); & n<0.
\end{cases}
$$
By Kingman's subadditive ergodic theorem, there exists $\Lambda(A)\in\R$ with
\begin{equation}\label{subadditive}
\Lambda(A) = \lim_{n \to \infty} \frac{1}{n} \ln \|A(n,x)\|
\end{equation}
for $\mu$ a.e.\ $x \in X$ if $(X,T,\mu)$ is an ergodic dynamical system. It is well known that the unique ergodicity of $(X,T)$ is equivalent to uniform convergence in the Birkhoff  additive ergodic theorem when applied to continuous functions. Motivated by this, Furman \cite{F} gives the following definition for cocycles.

\begin{defi}(\cite{F})
Let $(X,T)$ be strictly ergodic. A continuous function $A : X \to {\rm GL}(2,\R) $ is called \textit{uniform} if the limit
$$
\displaystyle \Lambda(A) = \lim_{n \to \infty} \frac{1}{n} \ln \|A(n,x)\|
$$
exists for all $x \in X$ and the convergence is uniform on $X$.
\end{defi}

\subsection{Toeplitz Subshifts}\label{toeplitz}

Toeplitz words have been extensively studied since the work \cite{JK}. These words are constructed by starting with the ``empty sequence''
and successively filling the ``holes'' with periodic sequences. In this paper we will consider a more general family of Toeplitz words than those studied in \cite{KZ02, LQ11, QRWX10}.

Given a finite alphabet $\A$, let $\A^n$ be the set of finite words over $\A$ of length $n$. For $w \in \A^n$, we denote the length of $w$
by $|w|=n$. Let $\displaystyle \A^\ast := \bigcup_{n \geq 0} \A^n$. For two words $u, v \in \A^\ast$, if there exist $v^1, v^2 \in \A^\ast$ such
that $u = v^1 v v^2$, then we say that $v$ is a factor of $u$ and write $v \prec u$. Let ${\rm FG}(\A)$ be the free group generated by $\A$. Throughout this paper, $a^{-1}$ always denotes the inverse of $a$, where $a$ is viewed as an element of ${\rm FG}(\A)$.

Now we begin to define the concept of a Toeplitz word. The definition we give here is essentially taken from \cite{QRWX10}.

\textbf{Partial word.} Let ${\mathcal A} \subset \R$ be a finite set with $\#{\mathcal A} \geq 2$. Let $\alpha$ be a word in $({\mathcal A} \cup \{?\})^\Z$. Let us denote by $U = \{ x \in \Z : \alpha(x) = ? \}$ the set of positions of ``?.'' We say $\alpha$ is a {\it partial word with undetermined part} $U$.

A partial word over the alphabet $\{a,?\}$ is called a \emph{simple partial word}, where $a \in {\mathcal A}$.

\textbf{Composition of partial words.} We are particularly interested in the partial words with undetermined part of the form $n \Z + l$ with $n \geq 2$ and $0 \leq l < n$. Let $\alpha$ be a partial word with undetermined part $n \Z + l$ and let $\beta$ be a partial word. We define the composition of $\alpha$ and $\beta$ by
$$
\alpha \vartriangleleft \beta(x) = \begin{cases} \alpha(x), & \text{ if } x\not \in n\Z+l, \\ \beta((x-l)/n), & \text{ if } x \in n \Z + l.\end{cases}
$$
Roughly speaking, we map the word $\beta$ into the undetermined part of $\alpha$.

\medskip

Assume $n \geq 2$. For $\varpi \in {\mathcal A}^{n-1}$, we can define a partial word $\beta^{(\varpi,n,l)}$ as follows:
$$
\beta^{(\varpi,n,l)}(x+n) = \beta^{(\varpi,n,l)}(x)\  (\forall x \in \Z)\ \  \text{ and }\ \  \beta^{(\varpi,n,l)}([l+1,l+n]) = \varpi
?.
$$
Thus $\beta^{(\varpi,n,l)}$ is a periodic word with period $n$ and undetermined part $n\Z+l$.

Now let us consider the composition of a sequence of partial words $\beta^{(\varpi^{(i)},n_i,l_i)}$. For $m \geq 1$, define
$$
\beta_m := \beta^{(\varpi^{(1)},n_1,l_1)} \lhd \beta^{(\varpi^{(2)},n_2,l_2)} \cdots \lhd\beta^{(\varpi^{(m)},n_m,l_m)},
$$
%
where the operation $\lhd$ is associative. Let $n_0 := 1$. It is easy to see that $\beta_m$ is a periodic partial word over ${\mathcal A}\cup\{?\}$ with undetermined part
$$
\D_m := {n_0 \cdots n_m} \Z + \sum_{i=1}^{m} n_0 \cdots n_{i-1} l_{i}.
$$
Notice that $\D_m \supset \D_{m+1}$. So $\beta_m$ is a sequence of partial words with smaller and smaller undetermined parts. In particular,
$$
\beta_m(x) = \beta_m(y), \ \  \text{ if } x \equiv y \pmod {n_0 n_1 \cdots n_m}.
$$

Define $\D_\infty := \bigcap_{m=1}^{\infty} \D_m$. It is easy to see that $\D_\infty$ is either empty or contains only one element. Let us denote the
limit of $\beta_m$ by
$$
\beta_\infty := \lim_{m \to \infty} \beta_m.
$$
Clearly $\beta_\infty$ is a partial word over ${\mathcal A}\cup\{?\}$ with undetermined part $\D_\infty$. It is called a {\it normal Toeplitz word} over ${\mathcal A}$ if $\D_\infty = \emptyset$ and it is non periodic.

Define $\tilde \A := \{ a \in \A : a \prec \varpi^{(k)} \text{ infinitely often} \}$ to be the set of {\it recurrent letters}. If $\beta_\infty$ is
not normal, then for any $a \in \tilde \A$, we can define a new word $\beta_\infty^{(a)}$ by
$$
\beta_\infty^{(a)}(x)=
\begin{cases}
\beta_\infty(x) & x \not\in \D_\infty, \\
a & x \in \D_\infty.
\end{cases}
$$
It is called an {\it extended Toeplitz word} if it is non periodic. Both normal Toeplitz words and extended Toeplitz words are called {\it Toeplitz words}.

The sequence $\{ (\varpi^{(k)},n_k,l_k) \}_{ k\geq 1}$ is called a {\it coding} of the corresponding Toeplitz word. If for any $k > 0$, $\varpi^{(k)} = a_k^{n_k-1}$ for some $a_k \in \A$ and $a_k\ne a_{k+1},$ then the resulting word is called a {\it simple Toeplitz word.}

If $\beta$ is a Toeplitz word over an alphabet of two letters, and $\beta$ is pattern Sturmian, we say that $\beta$ is a pattern Sturmian Toeplitz word. The following result is shown in \cite{GKTX06} for pattern Sturmian Toeplitz words:

\begin{prop}\label{pattern-sturm-toeplitz}
If $\beta$ is a pattern Sturmian Toeplitz word, then there exist a partial word $\beta^{(\varpi,n,l)}$ and a simple Toeplitz word $\tilde \beta$ such that
$$
\beta = \beta^{(\varpi,n,l)} \lhd \tilde \beta.
$$
\end{prop}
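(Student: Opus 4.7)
The plan is to exploit a coding $\{(\varpi^{(k)}, n_k, l_k)\}_{k \geq 1}$ of $\beta$ and to show that the pattern Sturmian condition $p^*(n) = 2n$ forces, from some index $k_0$ onward, every coding word $\varpi^{(k)}$ to be a single-letter run $a_k^{n_k - 1}$. After a cosmetic consolidation of consecutive levels sharing the same letter, which ensures $a_k \neq a_{k+1}$ for $k > k_0$, one packages the first $k_0$ coding steps into a single partial word using the associativity of $\lhd$: the composition $\beta^{(\varpi^{(1)}, n_1, l_1)} \lhd \cdots \lhd \beta^{(\varpi^{(k_0)}, n_{k_0}, l_{k_0})}$ is periodic with period $n := n_1 \cdots n_{k_0}$ and carries exactly one hole per period (by the definition of $\D_{k_0}$), hence is itself of the form $\beta^{(\varpi, n, l)}$. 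The tail coding then produces a simple Toeplitz word $\tilde \beta$, and one obtains $\beta = \beta^{(\varpi, n, l)} \lhd \tilde \beta$.

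The analytic core is a lower bound $p^*(n) \geq 2n + 1$ valid whenever some coding word $\varpi^{(k)}$ contains both letters $a$ and $b$. I would choose a sampling $\tau = (0, \tau(1), \ldots, \tau(n-1))$ whose consecutive gaps are multiples of $n_1 \cdots n_{k-1}$ arranged to cycle through positions of $\D_{k-1} \setminus \D_k$ within one period of $\beta_k$. Since $\varpi^{(k)}$ is non-constant, these positions register a non-trivial binary pattern, and translating the shift $m \in \Z$ through residues modulo $n_1 \cdots n_k$ then realises strictly more than $2n$ distinct sampled words $V(m + \tau(0)) \cdots V(m + \tau(n-1))$, violating the pattern Sturmian bound. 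Since this contradiction can be produced at arbitrarily large scales when infinitely many $\varpi^{(k)}$ are non-simple, only finitely many coding words can fail to be single-letter runs.

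Once the structural statement is in hand, one absorbs the finitely many non-simple levels into a single prefix; any subsequent consecutive simple levels sharing the same letter can be merged by replacing $\beta^{(a^{n_k - 1}, n_k, l_k)} \lhd \beta^{(a^{n_{k+1} - 1}, n_{k+1}, l_{k+1})}$ with a single level $\beta^{(a^{n_k n_{k+1} - 1}, n_k n_{k+1}, l')}$ for an appropriate $l'$, and absorbed into the prefix as well. The main obstacle is the counting argument of the middle paragraph: one must carefully verify that the chosen sampling $\tau$ produces genuinely new patterns rather than duplicates already counted by the ambient periodic structure at earlier levels, and this requires systematic bookkeeping of which letters at the sampled positions are pinned down by the partial word $\beta_{k-1}$ and which remain free to vary with the shift $m$.
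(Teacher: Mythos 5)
The paper does not prove this proposition at all: it is quoted verbatim from Gjini--Kamae--Tan--Xue \cite{GKTX06}, whose main theorem is a full characterization of the pattern Sturmian Toeplitz words. So you are attempting a from-scratch proof of a result the authors import, and unfortunately the attempt has a genuine gap at its core. Your ``analytic core'' asserts that $p^*(n) \ge 2n+1$ for some $n$ whenever \emph{some} coding word $\varpi^{(k)}$ contains both letters. Taken literally (at $k=1$) this would force every pattern Sturmian Toeplitz word to be simple, i.e.\ to have no non-trivial head block at all --- but the proposition you are proving explicitly allows the head $\varpi$ to contain both letters, and the whole point of the statement is that exactly one non-simple level may survive at the front. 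A lemma that contradicts the conclusion it is meant to establish cannot be the right engine. If instead you intend the lemma only for $k$ beyond some threshold, you run into a second problem: codings are highly non-unique. The paper itself merges consecutive levels to arrange $n_k \ge 3$, and merging $\beta^{(a^{n_k-1},n_k,l_k)} \lhd \beta^{(b^{n_{k+1}-1},n_{k+1},l_{k+1})}$ produces a single level whose coding word $a^{n_k-1} b a^{n_k-1} b \cdots a^{n_k-1}$ is non-constant. Hence a simple Toeplitz word --- which \emph{is} pattern Sturmian --- admits codings in which every coding word from some point on is non-constant, and your sampling argument applied to such a coding would ``prove'' $p^* > 2n$, a contradiction. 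The correct statement must be about the existence of a suitably reduced coding, and identifying that canonical form is precisely the non-trivial content of \cite{GKTX06}.

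Beyond this structural issue, the counting step is not actually carried out: you assert that cycling the shift $m$ through residues modulo $n_1 \cdots n_k$ yields strictly more than $2n$ distinct sampled words, but you never count how many patterns are contributed by shifts landing in the determined part of $\beta_{k-1}$ versus the holes, nor rule out coincidences between them --- and you acknowledge this yourself in the final paragraph. Since this is exactly where the if-and-only-if character of the \cite{GKTX06} characterization lives (compositions with a two-letter head block can still achieve $p^*(n)=2n$ under the right circumstances), the proof cannot be regarded as complete. The honest options are to cite \cite{GKTX06} as the paper does, or to reconstruct their argument, which requires working with a normalized coding and a considerably more delicate complexity count than the one sketched here.
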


We remark that here $n=1$ is allowed. In this case, $l=0$ and $\varpi$ is an empty word. Thus $\beta^{(\varpi,n,l)}=?^\Z$ and  $\beta = \tilde \beta$ is indeed a simple Toeplitz word. It is known that if $\beta$ is a simple Toeplitz word over an alphabet of two letters, then $\beta $ is pattern Sturmian (see \cite{KZ02}).

Given a Toeplitz word $\beta$ over $\A$, let $\Omega_\beta$ be the closure of the orbit $\{ T^n \beta : n \in \Z\}$ of $\beta$ under the left shift $T$. It is clear that $\Omega_\beta$ is closed and invariant under $T$. $(\Omega_\beta,T)$ is called the {\it Toeplitz subshift} generated by $\beta$. Moreover, if $\{n_k\}_{k>0}$ is bounded, then $(\Omega_\beta,T)$ is called a {\it bounded Toeplitz subshift}; if $\beta$ is a simple Toeplitz word, then $(\Omega_\beta,T)$ is called a {\it simple Toeplitz subshift}. If $\{n_k\}_{k>0}$ is unbounded, then $(\Omega_\beta,T)$ is called an {\it  unbounded Toeplitz subshift}.

Let $\beta $ be a Toeplitz word with coding $\{(\varpi^{(k)},n_k,l_k)\}_{k \geq 1}$. We can assume $n_k \geq 3$ for any $k \in \N$. In fact if
$n_k = 2$ for some $k \in \N$, then
$$
\beta^{(\varpi^{(k)},n_k,l_k)} \lhd \beta^{(\varpi^{(k+1)},n_{k+1},l_{k+1})} = \beta^{(\tilde \varpi,\tilde n, \tilde l)}
$$
with $\tilde \varpi = \varpi^{(k)}\varpi^{(k+1)}_1 \cdots \varpi^{(k)} \varpi^{(k+1)}_{n_{k+1}-1} \varpi^{(k)}$, $\tilde n = n_k n_{k+1}$ and $\tilde l = l_k + n_k l_{k+1}.$ Since $\lhd$ is associative,
$$
\{(\varpi^{(1)},n_1,l_1), \cdots (\varpi^{(k-1)},n_{k-1},l_{k-1}), (\tilde \varpi, \tilde n, \tilde l), (\varpi^{(j)}, n_j, l_j) : j \geq k+2 \}
$$
is also a coding of $\beta$, but now $\tilde n = n_k n_{k+1} \geq 4 > 3$. Thus from now on we always assume $n_k \geq 3$ for any $k \in \N$.

Assume $\beta$ and $\tilde \beta$ have coding $\{ (\varpi^{(k)}, n_k, l_k) \}_{k \geq 1}$ and $\{ (\varpi^{(k)}, n_k, \tilde l_k) \}_{k \geq 1}$, respectively. Define
$$
p_m := \sum_{i=1}^{m} n_0 \cdots n_{i-1} l_{i}, \quad \tilde p_m := \sum_{i=1}^{m} n_0 \cdots n_{i-1} \tilde l_{i}.
$$
Then,
\begin{equation}\label{translation}
\beta(x) = \tilde \beta (x + (\tilde p_m - p_m)), \quad x \not\in \D_m.
\end{equation}
This can be shown easily by induction.

In \cite{JK} and \cite{KZ02}, only normal words over a $2$-letter alphabet are considered. However, for our purpose the extended words with $\D_\infty = \{ 0 \}$ have crucial importance.

Now we list two dynamical properties which we need later; for more details, see \cite{LQ11,LQ12}.

\begin{prop}\label{gene-toep}
For any Toeplitz word $\beta$, the system $(\Omega_\beta,T)$ is strictly ergodic.
\end{prop}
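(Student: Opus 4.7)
The plan is to establish minimality and unique ergodicity of $(\Omega_\beta, T)$ separately, in both cases exploiting the periodic approximants $\beta_m$. Recall that $\beta_m$ is periodic with period $P_m := n_0 n_1 \cdots n_m$ and that its undetermined set $\D_m$ is a single arithmetic progression of step $P_m$, hence of density $1/P_m$; since $n_k \geq 3$, we have $P_m \to \infty$, and $\D_\infty = \bigcap_m \D_m$ contains at most one element.

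For minimality I would show that $\beta$ is uniformly recurrent. Given a factor $w$ of $\beta$ of length $\ell$, first pick an occurrence $\beta[x_0, x_0+\ell-1] = w$ whose window is disjoint from $\D_\infty$. In the normal case $\D_\infty = \emptyset$ this is automatic; in the extended case $\D_\infty = \{d\}$ one invokes the fact that recurrent letters $a \in \tilde\A$ appear in $\varpi^{(k)}$ for infinitely many $k$ and are inserted into the undetermined slots of $\beta_{k-1}$ at infinitely many scales, producing translated copies of any local pattern around $d$ that necessarily avoid $d$. For $m$ large enough that $\D_m \cap [x_0, x_0+\ell-1] = \emptyset$, the step-$P_m$ periodicity of $\beta_m$ forces $\beta[x_0 + kP_m, x_0 + kP_m + \ell - 1] = w$ for every $k \in \Z$, since the translated window also avoids $\D_m$. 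Thus $w$ recurs in $\beta$ with gaps at most $P_m$, giving uniform recurrence and hence minimality of $\Omega_\beta$.

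For unique ergodicity I would prove that every finite factor $w$ has a uniform frequency in each $\omega \in \Omega_\beta$, which by a standard approximation is equivalent to uniform convergence of Birkhoff averages for continuous functions. Partition the starting positions $x \in \{0, 1, \ldots, P_m - 1\}$ relative to $\beta_m$ into forced occurrences of $w$ (where $\beta_m[x, x+\ell-1] = w$), forced non-occurrences (where some letter in the window contradicts $w$), and ambiguous windows meeting the single element of $\D_m$ in that period; the ambiguous class has at most $\ell$ elements. Every $\omega \in \Omega_\beta$ agrees with some shift of $\beta_m$ off a translate of $\D_m$, so for every long subinterval the normalized count of occurrences of $w$ in $\omega$ differs from the forced density $f_m(w)$ by at most $\ell/P_m$, plus a negligible boundary term, uniformly in $\omega$ and in the choice of interval. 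The $f_m(w)$ therefore form a Cauchy sequence whose limit supplies the desired uniform frequency.

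The hard part will be handling the extended case uniformly across $\Omega_\beta$: one must verify that every $\omega \in \Omega_\beta$ --- including shifts of the alternative extensions $\beta_\infty^{(a)}$ for $a \in \tilde\A$ --- is captured by a unique shift of each $\beta_m$ modulo $\D_m$, and that the resulting factor sets all coincide with that of $\beta$. Structurally this is the statement that the natural projection $\Omega_\beta \to \varprojlim_m \Z/P_m\Z$ onto the associated odometer is an almost one-to-one factor map, with non-injective fibers confined to a single orbit; granting this, strict ergodicity of $\Omega_\beta$ descends from strict ergodicity of the (classical) odometer, and the combinatorial estimates above make this descent quantitative. The detailed bookkeeping is essentially the content of \cite{LQ11, LQ12}.
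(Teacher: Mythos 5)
The paper does not actually prove this proposition; it simply records it and refers to \cite{LQ11,LQ12} for the details, so there is no in-paper argument to compare against line by line. Your sketch is the standard route for Toeplitz subshifts and is essentially correct in this setting: minimality via uniform recurrence of $\beta$ (using that each window eventually avoids $\D_m$ and is then repeated with period $P_m$, plus the recurrent-letter argument to handle the single site of $\D_\infty$ in the extended case), and unique ergodicity via uniform factor frequencies with error at most $\ell/P_m$ coming from the at most one ambiguous position per period. One caution: the closing structural remark, that strict ergodicity ``descends'' from the odometer because $\Omega_\beta \to \varprojlim_m \Z/P_m\Z$ is an almost one-to-one factor map, is not a valid argument on its own --- unique ergodicity does not pass up through almost one-to-one extensions in general, and there are well-known Toeplitz flows that are minimal almost one-to-one extensions of their odometers yet carry several invariant measures. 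What actually saves the day here is the \emph{regularity} of these Toeplitz words: $\D_m$ is a single residue class modulo $P_m$, so the undetermined part has density $1/P_m \to 0$, and it is precisely your quantitative $\ell/P_m$ estimate (not the odometer structure) that yields unique ergodicity. Since your main argument is that estimate and the odometer paragraph is only offered as a reformulation, the proof goes through, but the logical weight should rest on the frequency bound rather than on the factor-map statement.
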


\begin{prop}\label{coding}
Let $\beta$ be a Toeplitz word with coding $\{ (\varpi^{(k)}, n_k, l_k) \}_{k \geq 1}$. Then any $\omega \in \Omega_\beta$ is a Toeplitz word with
coding $\{ (\varpi^{(k)}, n_k, \tilde{l}_k) \}_{k \geq 1}$ for some $\{ \tilde{l}_k \}_{k \ge 1} \subset \mathbb{N} \cup \{ 0 \}$.
\end{prop}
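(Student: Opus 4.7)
The plan is to construct the coding of $\omega$ directly from a sequence $m_j \in \Z$ realizing $T^{m_j}\beta \to \omega$ in the product topology, by tracking how the translation behaves modulo $n_1 \cdots n_k$ at each scale $k$.

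First, since the partial word $\beta_k := \beta^{(\varpi^{(1)},n_1,l_1)} \lhd \cdots \lhd \beta^{(\varpi^{(k)},n_k,l_k)}$ is $(n_1 \cdots n_k)$-periodic, the translate $T^m \beta_k$ depends only on $m \bmod n_1 \cdots n_k$. By pigeonhole applied at each scale together with a diagonal extraction, I may pass to a subsequence (still denoted $m_j$) such that for every $k$ there is a residue $r_k \in \{0, \ldots, n_1 \cdots n_k - 1\}$ with $m_j \equiv r_k \pmod{n_1 \cdots n_k}$ for all $j$ sufficiently large. The residues automatically satisfy $r_{k+1} \equiv r_k \pmod{n_1 \cdots n_k}$.

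Second, I produce the candidate coding. Set $\tilde p_k := (p_k - r_k) \bmod n_1 \cdots n_k$ where $p_k = \sum_{i=1}^k n_1 \cdots n_{i-1} l_i$; compatibility of the $r_k$'s together with the identity $p_{k+1} = p_k + n_1 \cdots n_k l_{k+1}$ yields $\tilde p_{k+1} \equiv \tilde p_k \pmod{n_1 \cdots n_k}$, so there exist unique $\tilde l_k \in \{0, \ldots, n_k - 1\}$ with $\tilde p_k = \sum_{i=1}^k n_1 \cdots n_{i-1} \tilde l_i$. Put $\tilde \beta_k := \beta^{(\varpi^{(1)},n_1,\tilde l_1)} \lhd \cdots \lhd \beta^{(\varpi^{(k)},n_k,\tilde l_k)}$, which has undetermined part $\tilde \D_k = (n_1\cdots n_k)\Z + \tilde p_k$. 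A direct calculation (by induction on $k$, using the definition of $\beta^{(\varpi,n,l)}$) shows $T^{m_j}\beta_k = \tilde \beta_k$ as partial words for all $j$ sufficiently large. Passing to the limit in $j$, $\omega$ agrees with $\tilde \beta_k$ off $\tilde \D_k$ for every $k$, and hence with $\tilde \beta_\infty := \lim_k \tilde \beta_k$ off $\tilde \D_\infty := \bigcap_k \tilde \D_k$.

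Third, I handle the possibly remaining singleton. If $\tilde \D_\infty = \emptyset$, then $\omega = \tilde \beta_\infty$ is a normal Toeplitz word with the desired coding. If $\tilde \D_\infty = \{x_0\}$, let $a := \omega(x_0)$. For every fixed $k$, one has $x_0 + m_j \in \D_k$ for all large $j$; hence the value $\beta(x_0 + m_j)$ is either a letter of $\varpi^{(k')}$ for some $k' > k$ (when $x_0+m_j\not\in\D_\infty$) or the original extension letter (already in $\tilde \A$). Since $\beta(x_0+m_j) \to a$ and $k$ is arbitrary, the letter $a$ must appear in $\varpi^{(k')}$ for arbitrarily large $k'$, so $a \in \tilde \A$, and thus $\omega = \tilde \beta_\infty^{(a)}$ is an extended Toeplitz word with the claimed coding. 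In both cases non-periodicity of $\omega$ follows from aperiodicity of $\beta$ together with minimality of $(\Omega_\beta,T)$ (Proposition~\ref{gene-toep}), since a minimal system with one non-periodic point has no periodic points. The hard part is the extended case above: showing that the single common limit value $\omega(x_0)$ lies in $\tilde \A$ relies on the fact that $x_0$ belongs to every $\tilde \D_k$, which forces the ``origin'' of $\beta(x_0 + m_j)$ in the tower of compositions to lie at arbitrarily high levels. Everything else reduces to the pigeonhole/diagonal extraction and a routine verification of the composition formulas for $\lhd$.
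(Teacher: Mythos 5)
Your proof is correct. Note that the paper does not actually prove Proposition~\ref{coding}; it defers to \cite{LQ11,LQ12}, and your argument --- shift a Toeplitz word and its coding offsets transform by $l_k \mapsto \tilde l_k$ via the mixed-radix reduction of $p_k - m$ modulo $n_1\cdots n_k$, then pass to limits by a diagonal extraction over the scales --- is essentially the standard one found there. The two points that genuinely need care, namely that the limit value at the single residual site of $\tilde{\mathbb{D}}_\infty$ must be a recurrent letter, and that non-periodicity of $\omega$ follows from minimality (which is established independently of this proposition, so there is no circularity), are both handled correctly.
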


\subsection{Zero Measure Spectrum for Pattern Sturmian Toeplitz Subshifts}

Let $\beta$ be a pattern Sturmian Toeplitz word. By Proposition~\ref{pattern-sturm-toeplitz} there exist a partial word $\beta^{(\varpi,n,l)}$ and a simple Toeplitz word $\tilde \beta$ such that $\beta = \beta^{(\varpi,n,l)} \lhd \tilde \beta$. Assume that $\tilde \beta$ has coding $\{ (a_k, n_k, \tilde{l}_k) \}_{k \geq 1}$. Since $(\Omega_\beta,T)$ is strictly ergodic, the spectrum  $\sigma(H_\omega)$ is independent of $\omega \in \Omega_\beta$. Thus we can and will assume $l = 0$ and $\tilde l_k = 0$ for all $k \ge 1$.

\begin{lemma}
The subshift $(\Omega_\beta,T)$ satisfies condition (B).
\end{lemma}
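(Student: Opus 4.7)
The Boshernitzan condition (B) for a strictly ergodic subshift $(X, T)$ with invariant probability measure $\mu$ asserts that $\limsup_{n \to \infty} n \cdot \eta(n) > 0$, where $\eta(n) := \min_{w \in L_n(X)} \mu([w])$ and $L_n(X)$ denotes the set of factors of length $n$. I plan to verify (B) for $(\Omega_\beta, T)$ along the natural sequence of scales $N_k := n \cdot n_1 n_2 \cdots n_k$. Thanks to the normalization $l=0$, $\tilde l_j = 0$ already arranged, the partial composition
$$
\beta_k := \beta^{(\varpi,n,0)} \lhd \beta^{(a_1^{n_1-1},n_1,0)} \lhd \cdots \lhd \beta^{(a_k^{n_k-1},n_k,0)}
$$
is periodic of period $N_k$ with undetermined set $\D_k = N_k \Z$; in particular, any window of length $N_k$ contains exactly one position in $\D_k$.

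First, I would count: a factor of length $N_k$ is determined by its starting offset modulo $N_k$ together with the letter placed at the unique $\D_k$-position in the window by the tail of the coding. Since $\#\A = 2$, this yields at most $2 N_k$ factors in $L_{N_k}(\Omega_\beta)$. For each such factor $w$, specified by an offset $j$ and a filling letter $a \in \A$, unique ergodicity (Proposition~\ref{gene-toep}) together with Proposition~\ref{coding} identifies $\mu([w])$ with the frequency of $w$ in $\beta$, which factors as $(1/N_k) \cdot f_k(a)$, where $f_k(a)$ is the density of $a$ along the arithmetic progression $\D_k$. This density in turn coincides with the frequency of $a$ in the Toeplitz word obtained by restricting $\beta$ to $\D_k$, whose coding is the tail $\{(a_{k+j}^{n_{k+j}-1}, n_{k+j}, 0)\}_{j \ge 1}$.

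The crux of the proof is then to produce a subsequence $k_m \to \infty$ along which $f_{k_m}(a) \geq c$ for both letters $a \in \A$, with $c > 0$ uniform in $m$; this yields $N_{k_m} \eta(N_{k_m}) \geq c/2$ and establishes (B). Since $\beta$ is non-periodic over a two-letter alphabet, the sequence $\{a_k\}_{k \ge 1}$ cannot be eventually constant, so there are infinitely many $k$ with $a_{k+1} \neq a_{k+2}$. For such $k$ one has the explicit lower bounds
$$
f_k(a_{k+1}) \geq \frac{n_{k+1}-1}{n_{k+1}} \geq \frac{2}{3}, \qquad f_k(a_{k+2}) \geq \frac{n_{k+2}-1}{n_{k+1} n_{k+2}}.
$$
The main obstacle is the unbounded regime where $\{n_k\}$ grows without bound, in which case the second bound degenerates along every such subsequence. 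I would address this either by passing to a refined subsequence chosen so that $n_{k+1} n_{k+2}$ is controlled, or by working at intermediate scales that interpolate between $N_k$ and $N_{k+1}$ so that the window captures both letters at a fixed density. The argument should closely parallel the treatment of simple Toeplitz subshifts in \cite{LQ11}, now incorporating the outermost layer $\beta^{(\varpi,n,0)}$.
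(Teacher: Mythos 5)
There is a genuine gap, and it sits exactly where you placed your disclaimer. The bounded case of your argument is fine, but the unbounded case is the entire difficulty, and neither of your proposed repairs closes it as stated. The first one is a dead end: if $\{n_k\}$ is unbounded we may as well assume $n_k \to \infty$, and then $n_{k+1}n_{k+2} \to \infty$ along \emph{every} subsequence, so no refinement of the scales $N_k$ controls it. More to the point, at scale $N_k$ your certified bound is $N_k\,\eta(N_k) \ge \min_a f_k(a) \asymp 1/n_{k+1}$, and this degeneration is not an artifact of the estimate: the factor of length $N_k$ displaying the minority letter at its unique $\D_k$-position occurs essentially only where that position falls in $\D_{k+1}$, so its frequency really is of order $1/N_{k+1}$. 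The scales themselves must be changed, not the subsequence of $k$'s.

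Your second suggestion (intermediate scales) is the right one, but the mechanism that makes it work is missing from the sketch, and it is the actual content of the proof. Take $m = \lfloor N_{k+1}/2 \rfloor$. A window of length $m$ meets about $n_{k+1}/2$ points of $\D_k$ but at most one point of $\D_{k+1}$. Because the alphabet has two letters and the coding letters of a simple Toeplitz word alternate ($a_j \ne a_{j+1}$ for all $j$, hence $a_{k+1} = a_{k+3} = \cdots$), a $\D_k$-position carries a letter different from $a_{k+1}$ if and only if it lies in $\D_{k+1}$ and carries $a_{k+2} = \overline{a_{k+1}}$. Consequently, a window whose $\D_{k+1}$-point happens to carry the minority letter $a_{k+1}$ produces the \emph{same} factor as a window at the same offset modulo $N_k$ that misses $\D_{k+1}$ entirely; its small frequency is absorbed into a common word of frequency at least $c/N_k \gg c/m$. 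The only genuinely rare factors of length $m$ are those exhibiting $a_{k+2}$ at a $\D_{k+1}$-point, and these have frequency at least $\tfrac{2}{3}N_{k+1}^{-1} \ge \tfrac{1}{3}m^{-1}$. This yields $\limsup_m m\,\eta(m) > 0$ whether or not $\{n_k\}$ is bounded, and the outer layer $\beta^{(\varpi,n,l)}$ changes nothing since it only modifies the periodic backbone determined by the offset modulo $N_k$. For comparison, the paper does not reprove any of this: its proof is a one-line reduction to \cite[Proposition~4.1]{LQ11}, where this argument is carried out for simple Toeplitz subshifts, together with the observation that the structure $\beta = \beta^{(\varpi,n,l)} \lhd \tilde\beta$ over a two-letter alphabet leaves that argument intact.
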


\begin{proof}
The proof is the same as that of \cite[Proposition 4.1]{LQ11} since $\beta$ has the structure $\beta = \beta^{(\varpi,n,l)} \lhd \tilde \beta$ and $\beta$ is a word over the alphabet $\{ a, b \}$.
\end{proof}

\begin{coro}\label{c.zeromeasure}
There exists a compact set $\Sigma \subset \R$ such that $\Sigma = \sigma(H_{\beta^\prime})$ for any $\beta^\prime \in \Omega_\beta$ and $\Sigma$ is of Lebesgue measure zero. Moreover $M^E$ is uniform for all $E \in \R$.
\end{coro}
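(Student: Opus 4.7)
The plan is to chain together three standard consequences of Boshernitzan's condition (B) for strictly ergodic subshifts over a finite alphabet. The hypothesis that $(\Omega_\beta, T)$ satisfies (B) has just been established in the preceding lemma, and together with Proposition \ref{gene-toep} this puts us exactly in the setting where the Damanik--Lenz uniform ergodic theorem applies. The first step is to observe that strict ergodicity, and in particular minimality of $(\Omega_\beta, T)$, forces $\sigma(H_{\beta'})$ to be independent of $\beta' \in \Omega_\beta$ by a now-classical strong-resolvent continuity argument; call this common compact set $\Sigma$.

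Next I would invoke the uniform ergodic theorem for Schr\"odinger cocycles: for a strictly ergodic subshift satisfying condition (B), every continuous $A : \Omega_\beta \to \mathrm{SL}(2,\R)$ gives rise to a uniform cocycle in the sense of Furman's definition from Section~\ref{s.toeplitz}. Applied to the transfer matrix
$$
M^E(\omega) = \begin{pmatrix} E - \omega(0) & -1 \\ 1 & 0 \end{pmatrix},
$$
this immediately yields uniformity of $M^E$ for every $E \in \R$, which is the second assertion of the corollary. To conclude zero Lebesgue measure of $\Sigma$, I would then combine uniformity with Furman's theorem and Johnson's characterization of the spectrum: uniformity together with $\Lambda(M^E) > 0$ forces uniform hyperbolicity of $M^E$, so energies where $\Lambda(M^E) > 0$ lie outside $\Sigma$, giving the inclusion $\Sigma \subseteq \{E \in \R : \Lambda(M^E) = 0\}$. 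Since the elements of $\Omega_\beta$ take only finitely many values and are aperiodic (by the definition of a Toeplitz word), Kotani's theorem forces $\{E : \Lambda(M^E) = 0\}$ to be a Lebesgue null set, and therefore so is $\Sigma$.

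The main obstacle is not located in this corollary at all but in the preceding lemma, which asks one to verify condition (B) for the subshift generated by a pattern Sturmian Toeplitz word; here the author defers to the construction of \cite{LQ11}, observing that the factorization $\beta = \beta^{(\varpi,n,l)} \lhd \tilde\beta$ reduces the question to the simple Toeplitz case already handled there. Once (B) is in hand, every step of the argument above is a direct invocation of pre-existing general machinery, and no further combinatorial analysis of $\beta$ is needed at this stage.
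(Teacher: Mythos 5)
Your proposal is correct and follows essentially the same route as the paper, which simply cites \cite{DL06}: condition (B) plus strict ergodicity gives uniformity of the Schr\"odinger cocycles, uniformity upgrades positive Lyapunov exponent to uniform hyperbolicity (hence absence from the spectrum), and Kotani theory \cite{K89} for aperiodic finitely-valued potentials makes $\{E : \Lambda(M^E)=0\}$ a null set. The only small inaccuracy is that the Damanik--Lenz theorem yields uniformity for \emph{locally constant} cocycles rather than for all continuous ones, but this is harmless since the transfer matrix $M^E(\omega)$ depends only on $\omega(0)$ and is therefore locally constant.
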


\begin{proof}
This is a consequence of \cite{DL06}.
\end{proof}

In the following we study the structure of the spectrum, which is needed when we discuss the point spectrum.

Recall that $\beta$ is a word over alphabet  $\{a,b\}$. Define $\bar{a}=b$ and $\bar{b}=a.$ Define $s_0 = \varpi a_1$ and $t_0 = \varpi \bar{a}_1$. Define inductively
$$
s_{k} = s_{k-1}^{n_{k}} a_{k}^{-1} a_{k+1},\ \ \  t_k = s_{k-1}^{n_{k}} \ \ (k \ge 1).
$$
In particular, $|s_k| = |t_k| =: \ell_k$ and we may observe the following:

\begin{prop}\label{trunc}
For every $k$, the words $s_k$ and $t_k$ are the same except for their respective rightmost symbol.
\end{prop}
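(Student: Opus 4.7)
The plan is to prove the proposition by a simple induction on $k$, simultaneously tracking an auxiliary fact that makes the inductive step go through cleanly: namely, that for every $k \ge 0$, the word $s_k$ (read as a reduced word in ${\rm FG}(\A)$) ends with the letter $a_{k+1}$. This auxiliary statement is exactly what is needed to ensure that the formal expression $s_{k-1}^{n_k} a_k^{-1} a_{k+1}$ reduces to a word of length $\ell_k = n_k \ell_{k-1}$, obtained from $s_{k-1}^{n_k}$ by replacing its last letter.

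For the base case $k=0$, both $s_0 = \varpi a_1$ and $t_0 = \varpi \bar a_1$ share the prefix $\varpi$ and differ only in their rightmost symbol, establishing the desired property; moreover $s_0$ ends with $a_1$, which is the auxiliary statement for $k=0$. For the inductive step, assume $s_{k-1}$ ends with $a_k$. Then the concatenation $s_{k-1}^{n_k}$ ends with $a_k$, so multiplying on the right by $a_k^{-1}$ in ${\rm FG}(\A)$ cancels exactly this last symbol; appending $a_{k+1}$ then yields a reduced word of length $n_k \ell_{k-1}$ whose first $n_k\ell_{k-1} - 1$ letters coincide with the first $n_k \ell_{k-1} - 1$ letters of $t_k = s_{k-1}^{n_k}$ (these consist of the $n_k - 1$ initial copies of $s_{k-1}$ followed by the first $\ell_{k-1}-1$ letters of the final copy), and whose last letter is $a_{k+1}$, whereas the last letter of $t_k$ is $a_k$. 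This simultaneously establishes the proposition at level $k$ and the auxiliary statement that $s_k$ ends with $a_{k+1}$, completing the induction.

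There is really no serious obstacle here; the argument is essentially a bookkeeping exercise in how the free-group cancellations prescribed by the recursion $s_k = s_{k-1}^{n_k} a_k^{-1} a_{k+1}$ interact with the known terminal letter of $s_{k-1}$. The only point one must be careful about is verifying that $|s_k| = |t_k| = n_k \ell_{k-1}$, which is the content of the length identity $\ell_k = n_k \ell_{k-1}$ and which follows immediately once the cancellation of the terminal $a_k$ is justified, that is, once the inductive hypothesis on the last letter of $s_{k-1}$ is available.
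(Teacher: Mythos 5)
Your proof is correct and is essentially the argument the paper has in mind: the paper records no explicit proof but immediately states the equivalent recursion $s_k = s_{k-1}^{n_k-1}t_{k-1}$, $t_k = s_{k-1}^{n_k-1}s_{k-1}$, which encodes exactly the same induction. Your auxiliary invariant that $s_{k-1}$ ends in $a_k$ is precisely what justifies the free-group cancellation in $s_k = s_{k-1}^{n_k}a_k^{-1}a_{k+1}$ and hence the equivalence of the two recursions, so your write-up is, if anything, slightly more careful than the paper's.
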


An equivalent way to define these two sequences is
\begin{equation}\label{e.sktkrec}
s_{k} = s_{k-1}^{n_{k}-1} t_{k-1}, \ \ \ t_{k} = s_{k-1}^{n_{k}} \ \ (k\ge 1).
\end{equation}

Fix some $E \in \R$. We  define
$$
A_a := \begin{pmatrix} E - a & -1 \\ 1 & 0 \end{pmatrix}
\ \ \
A_b := \begin{pmatrix} E - b & -1 \\ 1 & 0 \end{pmatrix}.
$$
Define $A_{a_1 \cdots a_n} := A_{a_n} \cdots A_{a_1}$. Define $M_n := A_{s_n}$ and $h_n = {\rm tr} (M_n)$. Define $\sigma_n = \{ E \in \R : |h_n(E)| \le 2 \}$.

\begin{prop}\label{p.tracesandspectrum}
We have
$$
\Sigma = \sigma(H_\beta) = \bigcap_{n} \sigma_n \cup \sigma_{n+1}.
$$
\end{prop}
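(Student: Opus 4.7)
By Corollary~\ref{c.zeromeasure} the cocycle $M^E$ is uniform for every $E \in \R$, so $E \in \Sigma$ if and only if $M^E$ is not uniformly hyperbolic on $(\Omega_\beta, T)$. The plan is to translate the condition ``$|h_n(E)| > 2$ and $|h_{n+1}(E)| > 2$ for some $n$'' into uniform hyperbolicity of $M^E$, via a trace-map analysis. The starting point is a recursion built from \eqref{e.sktkrec} together with Proposition~\ref{trunc}: since $s_{k-1}$ ends in $a_k$ and $t_{k-1}$ ends in $a_{k+1}$, one has
\[
M_k = A_{t_{k-1}} M_{k-1}^{n_k-1} = A_{a_{k+1}} A_{a_k}^{-1} M_{k-1}^{n_k} = P_k N_k,
\]
where $N_k := A_{t_k} = M_{k-1}^{n_k}$ and $P_k := A_{a_{k+1}} A_{a_k}^{-1} = \begin{pmatrix} 1 & a_k - a_{k+1} \\ 0 & 1 \end{pmatrix}$ is parabolic. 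Applying Cayley--Hamilton to $M_{k-1}^{n_k}$ in terms of the Chebyshev-like polynomials $x_j(t)$ (defined by $x_0 = 0$, $x_1 = 1$, $x_{j+1} = t\, x_j - x_{j-1}$) and taking traces (using ${\rm tr}(P_k) = 2$ and $(M_k)_{21} = (N_k)_{21}$) yields, after elimination, the clean system
\begin{align*}
g_k &= x_{n_k}(h_{k-1})\, h_{k-1} - 2\, x_{n_k - 1}(h_{k-1}), \\
h_k &= x_{n_k}(h_{k-1})\, g_{k-1} - 2\, x_{n_k - 1}(h_{k-1}),
\end{align*}
where $g_k := {\rm tr}(A_{t_k})$.

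For the inclusion $\Sigma \subseteq \bigcap_n (\sigma_n \cup \sigma_{n+1})$, I would prove the contrapositive: if $|h_n(E)|, |h_{n+1}(E)| > 2$ for some $n$, then $E \notin \Sigma$. Since $|x_j(t)|$ grows geometrically in $j$ as soon as $|t| > 2$, an induction on the trace-map system above, in the spirit of \cite{BIST89, S87, LQ11}, propagates the double inequality to all $m \ge n$, giving $|h_m|, |g_m| \to \infty$ super-exponentially. Combined with $\|M_m\| \ge |h_m|/2$ and the uniform boundedness of $\|A_a\|$ and $\|A_b\|$, this produces uniform exponential growth $\|M_\ell^E(\omega)\| \ge e^{c \ell}$ for all sufficiently large $\ell$ and every $\omega \in \Omega_\beta$, i.e.\ uniform hyperbolicity of $M^E$, and hence $E \notin \Sigma$.

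For the reverse inclusion $\bigcap_n (\sigma_n \cup \sigma_{n+1}) \subseteq \Sigma$, suppose $E \notin \Sigma$, so $M^E$ is uniformly hyperbolic and the uniform convergence of $(1/\ell) \log \|M_\ell^E(\omega)\|$ to $\Lambda(E) > 0$ forces $\|A_{s_n}\|, \|A_{t_n}\| \ge e^{c \ell_n}$ for all large $n$. Exploiting that $N_n = M_{n-1}^{n_n} = x_{n_n}(h_{n-1}) M_{n-1} - x_{n_n - 1}(h_{n-1}) I$ (with the $x_j$ only polynomial in $j$ as soon as $|h_{n-1}| \le 2$) together with the trace-map relations above, a standard argument parallel to \cite{LQ11} rules out $|h_{n-1}(E)| \le 2$ for large $n$, so that $|h_n(E)|, |h_{n+1}(E)| > 2$ eventually. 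Hence $E \notin \sigma_n \cup \sigma_{n+1}$ for all large $n$, i.e.\ $E \notin \bigcap_n(\sigma_n \cup \sigma_{n+1})$.

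The main obstacle will be the trace-map induction in the forward direction. The parabolic factor $P_k$ couples $h_k$ with $g_k$ nontrivially, and the Chebyshev factor $x_{n_k}(h_{k-1})$ can be very large when the step parameter $n_k$ is large, so one needs a joint induction on a stable pair of quantities (for instance $(|h_k|, |g_k|)$, or $\Delta_k := h_k - g_k$ together with $h_k$) preserved by the recursion. This is carried out for the simple Toeplitz case in \cite{LQ11}; the pattern Sturmian extension differs only in the base block $s_0 = \varpi a_1$, which replaces the simple choice $s_0 = a_1^{n_1}$. Since the recursion from level $1$ onward is formally identical, the inductive step can be inherited from \cite{LQ11}, and the main remaining work is to verify the new base case.
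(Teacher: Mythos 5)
Your proposal is correct and follows essentially the same route as the paper: both derive a Chebyshev/trace-map recursion from the block structure $s_k = s_{k-1}^{n_k-1} t_{k-1}$, $t_k = s_{k-1}^{n_k}$ (with the parabolic correction matrix $D_k = A_{a_{k-1}^{-1}a_k}$ of trace $2$), invoke the propagation lemma that $|h_k(E)|, |h_{k+1}(E)| > 2$ persists to all higher levels (the paper cites \cite[Lemma 3.1]{LQ11}), and use the uniformity of $M^E$ from Corollary~\ref{c.zeromeasure} to identify $\Sigma$ with the set of energies where the traces fail to escape. The only cosmetic differences are that you carry the auxiliary trace $g_k = \mathrm{tr}(A_{t_k})$ as a second variable where the paper writes a single second-order recursion for $h_k$ alone, and that you obtain the inclusion $\Sigma \subseteq \bigcap_n (\sigma_n \cup \sigma_{n+1})$ by establishing uniform hyperbolicity directly, whereas the paper routes it through the standard containment $\sigma(H_\beta) \subseteq \bigcap_k \overline{\bigcup_{m \ge k} \sigma_m}$.
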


\begin{proof}
Define $D_k := A_{a_{k-1}^{-1} a_{k}}$. Then ${\rm tr}(D_k) = 2$ and $D_{k} D_{k-1} = I$ for any $k$. We have
\begin{eqnarray*}
M_{k+2} & = & D_{k+2} M_{k+1}^{n_{k+1}} \\
& = & S_{n_{k+1}}(h_{k+1}) D_{k+2} M_{k+1} - S_{n_{k+1}-1}(h_{k+1})D_{k+2} \\
& = & S_{n_{k+1}}(h_{k+1}) D_{k+2} D_{k+1} M_{k}^{n_k} -S_{n_{k+1}-1}(h_{k+1}) D_{k+2} \\
& = & S_{n_{k+1}}(h_{k+1}) M_{k}^{n_k} - S_{n_{k+1}-1}(h_{k+1}) D_{k+2} \\
& = & S_{n_{k+1}}(h_{k+1}) S_{n_k}(h_k) M_k - S_{n_{k+1}}(h_{k+1}) S_{n_k-1}(h_k) I \\
& & -S_{n_{k+1}-1}(h_{k+1})D_{k+2}.
\end{eqnarray*}
By taking the trace we get
$$
h_{k+2} = S_{n_{k+1}}(h_{k+1}) \huge( S_{n_k}(h_k) h_k - 2 S_{n_k-1}(h_k) \huge) - 2 S_{n_{k+1}-1}(h_{k+1}).
$$
By \cite[Lemma 3.1]{LQ11}, if $|h_k(E)|, |h_{k+1}(E)| > 2$, then $|h_{k+n}(E)| > 2$ for any $n \ge 2$. Thus for any $n \ge k+2$
$$
\sigma_{n} \subset \sigma_{k} \cup \sigma_{k+1}.
$$
On the other hand, we always have
$$
\sigma(H_\beta) \subset \bigcap_{k} \overline{\bigcup_{m \ge k} \sigma_m},
$$
Thus we get
$$
\sigma(H_\beta) \subset \bigcap_{k} \sigma_k\cup\sigma_{k+1}.
$$

Recall that $M^E$ is uniform for any $E \in \R$. Thus,
$$
\gamma(E,\omega) := \lim_{n \to\infty} \frac{\ln \|M^E_n(\omega)\|}{n} = \gamma(E)
$$
and the convergence is uniform for $\omega \in \Omega_\beta.$

Now assume $E \in \bigcap_{k} \sigma_k \cup \sigma_{k+1}.$ Similarly as in \cite{LQ11}, we can show that $\gamma(E, \beta) = 0$. Consequently $\gamma(E) = 0$ and $E \in \Sigma$.
\end{proof}

\subsection{Absence of Point Spectrum}

Now fix any $\omega \in \Omega_\beta$. By Proposition~\ref{coding}, $\omega = \beta^{(\varpi,n,\tilde l)} \lhd \hat \omega$ for some $\tilde l$ and some simple Toeplitz word $\hat \omega$ with coding $\{ (a_k, n_k, \hat{l}_k)\}_{k \geq 1}$.

\begin{prop}\label{partition}
$\omega$ has the following decomposition:
$$
\omega = \cdots t_k \ s_k^{\tau_{-2}}\ t_k \ s_k^{\tau_{-1}} \ t_k \ s_k^{\tau_{0}}\ t_k \ s_k^{\tau_{1}}\ t_k \ s_k^{\tau_{2}}\cdots
$$
with $\tau_j = n_{k}-1$ or $2n_{k}-1$ for all $j \in \Z$.

Moreover, this decomposition is unique. That is, for any other such decomposition, the $\tau_j$'s must be the same and the blocks of the decomposition must be aligned in the same way. In particular, the decomposition at level $k+1$ induces that at level $k$ via the rules \eqref{e.sktkrec}.
\end{prop}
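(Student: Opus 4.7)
The plan is to prove existence, uniqueness, and the dichotomy on the $\tau_j$'s simultaneously, using the tower of undetermined sets associated to $\omega$. Write $\D_k$ for the positions still marked ``$?$'' in $\omega$ after applying the outer coding level $(\varpi,n,\tilde l)$ and the first $k$ simple-Toeplitz coding levels $\{(a_i,n_i,\hat l_i)\}_{1\le i\le k}$. These are nested arithmetic progressions with $\D_k = \ell_k\Z + c_k$ for appropriate $c_k$, and $\D_{k+1}$ is an $n_{k+1}$-sparse subset of $\D_k$. For each $p\in\Z$ there is a well-defined first-filling level $j(p)$; for $j(p)\geq 1$ we have $\omega(p) = a_{j(p)}$, and because the alphabet has only two letters with the $a_j$'s strictly alternating, $\omega(p)$ is determined by the parity of $j(p)$.

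\emph{Existence of the decomposition.} I would induct on $k$, aligning blocks of length $\ell_k$ to end at positions of $\D_k$. For the base case $k=0$, each block is $\varpi$ followed by $\omega(p)\in\{a_1,\bar a_1\}$, hence equals $s_0$ or $t_0$. For the inductive step, a level-$k$ block splits into $n_k$ consecutive level-$(k-1)$ sub-blocks. Using $\D_k\subset\D_{k-1}$ and the periodicity $\ell_k = n_k\ell_{k-1}$, one checks that the first $n_k-1$ sub-blocks end at positions in $\D_{k-1}\setminus\D_k$, all filled at simple-Toeplitz level $k$ with $a_k$; by the inductive hypothesis together with Proposition~\ref{trunc}, each such sub-block equals $s_{k-1}$ (which ends in $a_k$). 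The final sub-block ends at $p\in\D_k$, where $\omega(p)\in\{a_k,\bar a_k\}$, so it equals $s_{k-1}$ or $t_{k-1}$. Applying \eqref{e.sktkrec}, the whole level-$k$ block is $t_k = s_{k-1}^{n_k}$ or $s_k = s_{k-1}^{n_k-1}t_{k-1}$. Reading the classification off, a level-$k$ block is a $t_k$ precisely when $j(p)-k$ is even.

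\emph{Gap analysis.} For two consecutive $t_k$-positions $p_1<p_2$ in $\D_k$, every intermediate $\D_k$-position corresponds to an $s_k$-block, so no intermediate $\D_{k+1}$-position can lie in $\D_{k+1}\setminus\D_{k+2}$ (that would force $j=k+2$, yielding $t_k$). Hence every intermediate $\D_{k+1}$-position lies in $\D_{k+2}$. But $\D_{k+2}$ is an $n_{k+2}$-sparse subset of $\D_{k+1}$ and $n_{k+2}\geq 3$, so two consecutive $\D_{k+1}$-positions cannot both lie in $\D_{k+2}$. Therefore there is at most one intermediate $\D_{k+1}$-position between $p_1$ and $p_2$, which forces $p_2-p_1\in\{\ell_{k+1},2\ell_{k+1}\}$ and consequently $\tau_j\in\{n_{k+1}-1,2n_{k+1}-1\}$, giving the claimed dichotomy (up to the indexing convention used in the statement).

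\emph{Uniqueness and compatibility.} Uniqueness follows from Proposition~\ref{trunc}: $s_k$ and $t_k$ agree in every letter except the last, so any valid decomposition must have its block endpoints at positions where $\omega$ takes one of the two designated letters $a_{k+1},\bar a_{k+1}$; tracing through the Toeplitz construction identifies these endpoints exactly with $\D_k$, and the internal composite structure of $s_k$ and $t_k$ rigidifies the choice further by descending the same argument to each lower level. The induced compatibility between the level-$(k+1)$ and level-$k$ decompositions via \eqref{e.sktkrec} is then read off directly from the block-splitting in the inductive step. The main obstacle I anticipate is not mathematical depth but bookkeeping: keeping the nested arithmetic progressions $\D_k$ and the parity alternation $a_k\leftrightarrow\bar a_k$ straight so that the classification of $s_k$ versus $t_k$ remains consistent throughout the induction.
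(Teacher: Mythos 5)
Your existence argument and the $\tau_j$-dichotomy are essentially the paper's proof recast as an induction: the paper collapses the first coding levels into a single periodic partial word $\beta^{(s_k^\ast,\ell_k,l_k')}$, where $s_k^\ast$ is the common prefix of $s_k$ and $t_k$ guaranteed by Proposition~\ref{trunc}, and reads the block sequence off the explicit two-level period of the remaining simple Toeplitz word $\omega^{(k)}$; your level-by-level induction on the nested undetermined sets $\D_k$, together with the observation that $\D_{k+2}$ can contain at most one of any two consecutive points of $\D_{k+1}$, delivers the same conclusion. You are also right to hedge on the indexing: with the paper's definitions the exponents for the level-$k$ partition come out as $n_{k+1}-1$ or $2n_{k+1}-1$, so the discrepancy you flag is a slip in the statement's indexing rather than an error in your computation.

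The genuine gap is in the uniqueness part, which is not optional here: it is invoked explicitly in Cases 3 and 4 of the proof of Proposition~\ref{p.contspectrum}. Your justification --- that ``any valid decomposition must have its block endpoints at positions where $\omega$ takes one of the two designated letters $a_{k+1}, \bar a_{k+1}$'' --- is vacuous, because the alphabet has exactly two letters, so \emph{every} position of $\omega$ carries one of the two designated letters; the final letter of a block carries no alignment information at all. The alignment information lives in the fact that all blocks share the prefix $s_k^\ast$ of length $\ell_k-1$, and the step you assert (``tracing through the Toeplitz construction identifies these endpoints exactly with $\D_k$'') is precisely what must be proved. The paper's argument is: if a second decomposition had endpoints on $\ell_k\Z+j'$ with $j'\not\equiv j \pmod{\ell_k}$, then the positions $\ell_k\Z+j$ would sit at a fixed offset strictly before the last letter inside the blocks of the second decomposition, forcing $\omega|_{\ell_k\Z+j}$ to be a constant sequence; but the canonical decomposition identifies $\omega|_{\ell_k\Z+j}$ with the aperiodic Toeplitz word $\omega^{(k)}$, a contradiction. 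Without this constancy-versus-aperiodicity argument (or an equivalent rigidity argument, e.g.\ that any occurrence of a $t_{k-1}$ sub-block must be terminal in its level-$k$ block, combined with uniqueness one level down --- noting that even then the base case needs the same contradiction), the uniqueness claim and the induced compatibility of the level-$(k+1)$ and level-$k$ partitions via \eqref{e.sktkrec} remain unproven.
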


\begin{proof}
By the definition of a Toeplitz word,
$$
\omega = \beta^{(\varpi,n,\tilde l)} \lhd \hat \omega = \beta^{(\varpi,n,\tilde l)} \lhd \beta^{(a_1,n_1,\hat l_1)} \lhd \cdots \lhd \beta^{(a_{k-1},n_{k-1},\hat l_{k-1})} \lhd \omega^{(k)},
$$
where $\omega^{(k)}$ is the simple Toeplitz word with coding $\{ (a_p, n_p, \hat{l}_p) \}_{p \geq k}$. It is seen that
$$
\beta^{(\varpi, n, \tilde l)} \lhd \beta^{(a_1, n_1, \hat l_1)} \lhd \cdots \lhd \beta^{(a_{k-1}, n_{k-1}, \hat l_{k-1})} =\beta^{(s_k^\ast, \ell_k,  l_k^\prime)},
$$
where $s_k^\ast = s_k a_k^{-1}$, $\ell_k = |s_k|$, and
$$
\omega^{(k)} = \cdots \ast(a_k^{n_k-1} a_{k+1})^{n_{k+1}-1} a_k^{n_k-1}\ast (a_k^{n_k-1} a_{k+1})^{n_{k+1}-1} a_k^{n_k-1}\ast \cdots,
$$
where $\ast$ is $a$ or $b$. Notice that $s_k = s_k^\ast a_k$ and $t_k = s_k^\ast a_{k+1}$. Thus,
\begin{eqnarray*}
\omega & = & \beta^{(s_k^\ast, \ell_k,  l_k^\prime)} \lhd \omega^{(k)} \\
& = & \cdots \underbrace{s_k^{n_k-1} t_k \cdots s_k^{n_k-1} t_k}_{n_{k+1}-1}s_k^{n_k-1} \Diamond \underbrace{s_k^{n_k-1} t_k\cdots s_k^{n_k-1} t_k}_{n_{k+1}-1}s_k^{n_k-1} \Diamond \cdots
\end{eqnarray*}
with $\Diamond \in \{s_k,t_k\}$. Then the result follows easily.

Next we show the uniqueness of the decomposition. Assume that
$$
\omega=\dots s_k^\ast \alpha_{-2}s_k^\ast \alpha_{-1}s_k^\ast \alpha_{0}s_k^\ast \alpha_{1}s_k^\ast \alpha_{2}\cdots =\dots s_k^\ast \theta_{-2}s_k^\ast \theta_{-1}s_k^\ast \theta_{0}s_k^\ast \theta_{1}s_k^\ast \theta_{2}\cdots
$$
are two decompositions of $\omega$ such that the occurrences of $\alpha_i$ are $\ell_k\Z+j$ and the occurrences of $\theta_i$ are $\ell_k\Z+\hat j$.  We only need to show that $j\equiv \hat j\pmod{\ell_k}.$  If otherwise, considering the second decomposition of $\omega$ relative to the first, we conclude that $\omega|_{\ell_k\Z+j}$ is a constant sequence. However by consider the first decomposition of $\omega$, we know that $\omega|_{\ell_k\Z+j}=\omega^{(k)}$ is a Toeplitz word, which is not periodic. Thus, we get a contradiction.

By this uniqueness,  the decomposition at level $k+1$ induces that at level $k$ via the rules \eqref{e.sktkrec}.
\end{proof}

Next, we recall Gordon-type criteria which establish a link between combinatorial properties of the sequences $\omega \in \Omega$ and non-decay properties of the solutions to
\begin{equation}\label{eve}
(H_\omega-E)\phi = 0.
\end{equation}
For proofs we refer the reader to \cite{DP86, S87}.

Fix some $\omega \in \Omega$ and some $E \in \R$. Let $\phi$ be a two-sided sequence that solves \eqref{eve} and obeys the normalization condition
\begin{equation}\label{normal}
|\phi(-1)|^2 + |\phi(0)|^2 = 1.
\end{equation}
Denote $\Phi(n) = (\phi(n),\phi(n-1))^T$. Then we have the following proposition.

\begin{prop}\label{gordon}
{\rm (a)} If for some $m \in \N$, we have $\omega_{-m+j} = \omega_j = \omega_{m+j}$, $0 \le j \le m-1$, then
$$
\max \left( \| \Phi(-m)\|, \| \Phi(m) \|, \| \Phi(2m) \| \right) \ge \tfrac{1}{2}.
$$
{\rm (b)} If for some $m \in \N$, we have that $\omega_0 \ldots \omega_{2m-1}$ is a cyclic permutation of $s_n s_n$, then
$$
\max \left( |h_n(E)| \cdot \| \Phi(m) \|, \| \Phi(2m) \| \right) \ge \tfrac{1}{2}.
$$
Analogous conclusions hold if the assumptions in {\rm (a)} and {\rm (b)} are reflected at the origin.
\end{prop}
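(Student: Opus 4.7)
My approach is a Gordon-type argument driven by Cayley--Hamilton for $\mathrm{SL}(2,\R)$ matrices. For any finite word $w = w_1 \cdots w_k$ let $T^E(w) := A_{w_k} \cdots A_{w_1}$ denote the transfer matrix over $w$ at energy $E$, so that, along any solution of \eqref{eve}, $\Phi(n+r) = T^E(\omega_n \cdots \omega_{n+r-1}) \Phi(n)$. Since $\det T^E(w) = 1$, Cayley--Hamilton reads
\begin{equation*}
T^E(w)^2 - (\mathrm{tr}\, T^E(w)) \cdot T^E(w) + I = 0,
\end{equation*}
and both parts of the proposition reduce to applying this identity to well-chosen vectors $\Phi(\cdot)$.

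For part (a), I would let $W$ be the transfer matrix over the block $\omega_0 \cdots \omega_{m-1}$. The three-fold repetition hypothesis forces the transfer matrix across each of $[-m,-1]$, $[0,m-1]$, and $[m,2m-1]$ to equal $W$, so
\begin{equation*}
\Phi(m) = W \Phi(0), \quad \Phi(2m) = W^2 \Phi(0), \quad \Phi(0) = W \Phi(-m).
\end{equation*}
Applying Cayley--Hamilton to $\Phi(0)$ and then to $\Phi(-m)$ yields the pair of vector identities $\Phi(2m) - t\,\Phi(m) + \Phi(0) = 0$ and $\Phi(m) - t\,\Phi(0) + \Phi(-m) = 0$, where $t = \mathrm{tr}\, W$. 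Taking norms with $\|\Phi(0)\| = 1$ gives $1 \le |t|\,\|\Phi(m)\| + \|\Phi(2m)\|$ and $|t| \le \|\Phi(m)\| + \|\Phi(-m)\|$. A dichotomy on $|t|$ then finishes the argument: if $|t| \le 1$ the first inequality forces $\max(\|\Phi(m)\|, \|\Phi(2m)\|) \ge 1/2$, while if $|t| > 1$ the second forces $\max(\|\Phi(-m)\|, \|\Phi(m)\|) \ge 1/2$.

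For part (b), the extra ingredient is that the transfer matrix over any cyclic permutation of $s_n$ has the same trace $h_n(E)$. Writing the repeated half-window as $vu$ with $s_n = uv$, and setting $P = T^E(v)$, $Q = T^E(u)$, one has $M_n = PQ$ and $T^E(vu) = QP$, so $\mathrm{tr}(QP) = \mathrm{tr}(PQ) = h_n(E)$. Denoting this common matrix by $W'$, the hypothesis produces $\Phi(m) = W' \Phi(0)$ and $\Phi(2m) = (W')^2 \Phi(0)$, and Cayley--Hamilton applied to $\Phi(0)$ gives $\Phi(2m) - h_n(E)\,\Phi(m) + \Phi(0) = 0$; passing to norms yields the claim at once. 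The reflected versions are identical with $W$ replaced by $W^{-1}$ and $\Phi$ run backward. I do not expect any genuine obstacle; the only mild subtlety is the bookkeeping with cyclic permutations together with the ordering convention $A_{a_1 \cdots a_n} = A_{a_n} \cdots A_{a_1}$ used in this paper.
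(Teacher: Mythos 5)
Your proof is correct and is essentially the standard Gordon/S\"ut\H{o} argument via Cayley--Hamilton and trace invariance under cyclic permutation, which is exactly the argument the paper invokes by citing \cite{DP86, S87} rather than proving the proposition itself. The bookkeeping with the convention $A_{a_1 \cdots a_n} = A_{a_n} \cdots A_{a_1}$ and the normalization $\|\Phi(0)\|=1$ all check out, so there is nothing to add.
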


We see that we obtain useful estimates for the solutions $\phi$ of \eqref{eve} if we exhibit appropriate squares and cubes in $\omega$.

\begin{prop}\label{p.contspectrum}
For every $\omega \in \Omega$, the point spectrum of $H_\omega$ is empty.
\end{prop}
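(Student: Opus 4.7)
The plan is to assume, for contradiction, that $E$ is an eigenvalue of $H_\omega$ with normalized eigenfunction $\phi$ satisfying \eqref{normal}, and to exclude this via the Gordon criterion of Proposition~\ref{gordon}. Since eigenvalues belong to $\Sigma$, Proposition~\ref{p.tracesandspectrum} gives an infinite set $\mathcal{K} \subset \N$ with $|h_k(E)| \le 2$ for all $k \in \mathcal{K}$. By Proposition~\ref{partition}, at every level $k$ the word $\omega$ admits the decomposition
$$
\omega = \cdots t_k\, s_k^{\tau_{-1}}\, t_k\, s_k^{\tau_0}\, t_k\, s_k^{\tau_1}\, t_k \cdots, \qquad \tau_i \in \{n_k-1,\ 2n_k-1\}.
$$
Since $n_k \ge 3$, every $s_k^{\tau_i}$-block has length at least $2\ell_k$, so each such block contains the square $s_k s_k$, and the longer blocks ($\tau_i = 2n_k-1$) even contain cubes $s_k s_k s_k$.

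The heart of the argument is to show that for infinitely many $k \in \mathcal{K}$, the window $\omega_0 \omega_1 \cdots \omega_{2\ell_k-1}$ is a cyclic permutation of $s_k s_k$; this is equivalent to saying that the window lies entirely inside a single $s_k^{\tau_i}$-block. The position of the origin within the level-$k$ block hierarchy is determined by the parameters $\hat l_k$ from Proposition~\ref{coding}; using that the origin always sits within some block of length at least $2\ell_k$, and passing to a subsequence of $\mathcal{K}$ to avoid the rare configurations where the origin lies in the last $2\ell_k$ positions of its block, one secures the alignment at infinitely many levels. A symmetric argument, combined with the reflection clause of Proposition~\ref{gordon}, handles the window on the negative axis. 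With the alignment in hand, Proposition~\ref{gordon}(b) together with $|h_k(E)| \le 2$ yields
$$
\max\bigl(\|\Phi(\ell_k)\|,\ \|\Phi(2\ell_k)\|\bigr) \ge \tfrac14
$$
and the analogous bound on the left. As $k \to \infty$ along the chosen subsequence, these give infinitely many indices $n \to +\infty$ and $n \to -\infty$ at which $\|\Phi(n)\| \ge \tfrac14$, contradicting $\phi \in \ell^2(\Z)$.

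The main obstacle is the alignment step. For simple Toeplitz words this was handled in \cite{LQ11} by a direct combinatorial analysis of the block hierarchy. The additional feature of a general pattern Sturmian Toeplitz word is the outer partial word $\beta^{(\varpi,n,\tilde l)}$, which shifts the natural block positions; however, because Proposition~\ref{coding} shows this shift is governed uniformly by $\tilde l$ independent of $k$, it only alters the origin's relative position within level-$k$ blocks by a bounded amount, and the argument of \cite{LQ11} should adapt with only minor modifications. The most delicate case is when the origin falls near a block boundary at level $k$: here one either ascends to level $k+1$, at which the origin lies more comfortably inside a larger block, or invokes Proposition~\ref{gordon}(a) using a cube of $s_k$ drawn from a neighboring long block $s_k^{2n_k-1}$. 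Because at each level $k$ with $\tau_i = 2n_k - 1$ the associated block is a factor of length $(2n_k - 1)\ell_k$ that dwarfs the boundary zones of size $2\ell_k$, such alternative alignments are combinatorially abundant and the overall argument goes through.
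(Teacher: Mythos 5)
Your overall strategy --- combining the trace information from Proposition~\ref{p.tracesandspectrum} with the hierarchical decomposition of Proposition~\ref{partition} and the Gordon-type criteria of Proposition~\ref{gordon} --- is the same as the paper's. However, there is a genuine gap at the step you yourself call ``the heart of the argument,'' namely anchoring a square $s_k s_k$ at the origin. Your proposed fix is to pass to a subsequence of $\mathcal{K} = \{ k : |h_k(E)| \le 2 \}$ along which the origin avoids the last $2\ell_k$ positions of its block. But $\mathcal{K}$ is determined by the energy $E$ alone, while the position of the origin inside the level-$k$ blocks is determined by $\omega$ alone; nothing in your argument rules out that for \emph{every} $k \in \mathcal{K}$ the origin sits in a $t_k$-block or too close to the right end of an $s_k$-block, in which case no cyclic permutation of $s_k s_k$ with $k \in \mathcal{K}$ starts at the origin and the subsequence you want may be empty. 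The paper circumvents exactly this by using the \emph{stronger} consequence of Proposition~\ref{p.tracesandspectrum}: for every single $k$, at least one of $|h_k(E)| \le 2$, $|h_{k+1}(E)| \le 2$ holds. Thus when the trace bound fails at the level where the origin happens to be well placed, one only needs to climb one or two levels, and a multi-case analysis (using the uniqueness clause of Proposition~\ref{partition} to relate the level-$k$, $(k+1)$- and $(k+2)$-decompositions, together with both parts of Proposition~\ref{gordon} and their reflected versions) shows that a usable square or cube is anchored at the origin at one of those nearby levels. Deferring this to ``the argument of \cite{LQ11} should adapt with only minor modifications'' leaves precisely the content of the proof unproved.

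A second, related problem: your fallback of invoking Proposition~\ref{gordon}(a) ``using a cube of $s_k$ drawn from a neighboring long block'' misreads the criterion. Both parts of Proposition~\ref{gordon} require the repetition to be anchored at the origin (the hypothesis in (a) is $\omega_{-m+j} = \omega_j = \omega_{m+j}$ for $0 \le j \le m-1$, and in (b) the window must begin at position $0$); a square or cube occurring somewhere nearby in $\omega$ yields no estimate on $\Phi$. So the ``combinatorial abundance'' of squares and cubes in $\omega$ is not the issue --- what must be shown, and what the paper's case analysis actually shows, is that one of them sits at the origin at a level where the corresponding trace bound is simultaneously available.
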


\begin{proof}
Fix $\omega \in \Omega$, $E \in \Sigma$, and a solution $\phi$ to \eqref{eve} obeying \eqref{normal}. We want to prove that $\phi$ is not square-summable. We shall show that given any $n \in \N$, there exists $m \in \Z$ with $|m| \ge n$ such that $\| \Phi(m) \| \ge \frac{1}{4}$. From this the assertion clearly follows.

So let $n \in \N$ be fixed and pick $k \in \N$ such that $\ell_k = |s_k| = |t_k| \ge n$. Consider the $k$-partition of $\omega$.

\textit{Case 1: The site $0 \in \Z$ is contained in an $s_k$-block and this $s_k$-block is followed to the right by an $s_k$-block. In other words, we have $\hat s_k s_k$.} (Here and in the following, the hat-symbol marks the block that contains the site $0 \in \Z$.) Because of Proposition \ref{partition} there are two subcases.

\textit{Case 1.1: We have $s_k \hat{s}_k s_k$.} Because of Propositions~\ref{trunc} and \ref{partition} we can conclude by applying Proposition~\ref{gordon}~(a) with $m = \ell_k$.

\textit{Case 1.2: We have $t_k \hat{s}_k s_k$.} Then, either $|h_k(E)| \le 2$ and we are done in this case by Proposition~\ref{gordon}~(b), or $|h_k(E)| > 2$ and then $|h_{k+1}(E)| \le 2$ by Proposition~\ref{p.tracesandspectrum}. In the latter case we consider the $(k+1)$-partition where we must have either $s_{k+1} \hat{t}_{k+1}$ or $s_{k+1} \hat s_{k+1}$. Note that the origin is not the rightmost site in the block with the hat.

\textit{Case 1.2.1: We have $s_{k+1} \hat{t}_{k+1}$.} Then we have either $s_{k+1} s_{k+1} \hat{t}_{k+1}$ or $t_{k+1} s_{k+1} \hat t_{k+1}$.

\textit{Case 1.2.1.1: We have $s_{k+1} s_{k+1} \hat t_{k+1}$.} In this case we can conclude immediately by applying Proposition~\ref{gordon}~(b) (reflected at the origin) because we have $|h_{k+1}(E)| \le 2$ and the origin is not the rightmost site in $\hat t_{k+1}$.

\textit{Case 1.2.1.2: We have $t_{k+1} s_{k+1} \hat t_{k+1}$.} In this case we must have $s_{k+2} \hat s_{k+2}$ and the origin is not the rightmost site in $\hat s_{k+2}$. By Proposition~\ref{partition} we have $s_{k+2} s_{k+2} \hat s_{k+2}$ or $s_{k+2} \hat s_{k+2} s_{k+2}$.

\textit{Case 1.2.1.2.1: We have $s_{k+2} s_{k+2} \hat s_{k+2}$.} In this case we can conclude immediately by applying Proposition~\ref{gordon}~(a) (reflected at the origin) due to Propositions~\ref{trunc} and \ref{partition} and the fact that the origin is not the rightmost site in $\hat s_{k+2}$.

\textit{Case 1.2.1.2.2: We have $s_{k+2} \hat s_{k+2} s_{k+2}$.} Similarly to Case 1.1, we can conclude immediately by applying Proposition~\ref{gordon}~(a) due to Propositions~\ref{trunc} and \ref{partition} and the fact that the origin is not the rightmost site in $\hat s_{k+2}$.

\textit{Case 1.2.2: We have $s_{k+1} \hat s_{k+1}$.} This follows by the same argument as the one used in Case 1.2.1.2, with all indices reduced by one. This closes Case 1.

\textit{Case 2: In the $k$-partition we have $s_k \hat{s}_k$.} This case can be treated analogously to Case 1.2.1.2, with all indices reduced by two. This closes Case 2.

\textit{Case 3: In the $k$-partition we have $t_k \hat{s}_k t_k$.} We pass to the $(k+1)$-partition where we must have $s_{k+1} \hat{s}_{k+1}$ due to the uniqueness statement in Proposition~\ref{partition} and we can then proceed analogously to Case 1.2.2. This closes Case 3.

\textit{Case 4: In the $k$-partition we have $\hat{t}_k$.} Thus, due to the uniqueness statement in Proposition~\ref{partition}, in the $(k+1)$-partition we have $\hat{s}_{k+1}$ and we are in one of the Cases 1--3 with all indices increased by one. In particular, we obtain a sufficient solution estimate. This closes Case 4 and hence concludes the proof.
\end{proof}

Note that we have established all the statements in Theorem~\ref{t.patternsturmian}:

\begin{proof}[Proof of Theorem~\ref{t.patternsturmian}.]
Suppose $V$ is a pattern Sturmian Toeplitz potential and $H$ is the associated Schr\"odinger operator. By Corollary~\ref{c.zeromeasure}, $H$ has zero measure spectrum, which in turn also implies that $H$ has empty absolutely continuous spectrum. On the other hand, by Proposition~\ref{p.contspectrum}, $H$ has empty point spectrum.
\end{proof}

\section{Sparse Pattern Sturmian Sequences}\label{s.sparse}

In this section we discuss sparse pattern Sturmian sequences, the third class mentioned in Example~\ref{example}. Recall that these one-sided sequences take the form
\begin{equation}\label{e.sparsepss}
V(n) = \begin{cases} v & n = n_k \text{ for some } k, \\ 0 & \text{otherwise}, \end{cases}
\end{equation}
where $v \in \R \setminus \{ 0 \}$ and
\begin{equation}\label{e.sparsepss2}
n_{k+1} > 2 n_k \ge 1.
\end{equation}
Note that $n_k$ grows at least as fast as $2^k$ and hence the gaps $n_{k+1} - n_k > n_k$ grow at least geometrically. Of course they may grow faster, and this case will be of interest below. But even in the case of geometric growth of gaps, we will be able to prove several statements in the desired direction.

We have
$$
\sigma_\mathrm{ess}(H_\phi) = [-2,2] \cup \{ \mathrm{sgn} \, v \cdot \sqrt{4 + v^2} \}
$$
by \cite[Theorem~3.13]{CFKS}. In particular the spectrum outside $[-2,2]$ is non-empty and pure point. We claim that it is purely singular continuous inside $(-2,2)$ under suitable assumptions. This will follow from the next two lemmas.

\begin{lemma}
Suppose $V$ is of the form \eqref{e.sparsepss}, subject to \eqref{e.sparsepss2}. Then, for every boundary condition $\phi$, the absolutely continuous spectrum of the discrete Schr\"odinger operator $H_\phi$ in $\ell^2(\Z_+)$ is empty.
\end{lemma}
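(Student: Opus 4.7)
The plan is to combine the essential-spectrum decomposition recalled just before the lemma with Remling's oracle theorem on right limits (\cite{R11}). The point is that, because $V$ is supported on the sparse set $\{n_k\}$, all of its right limits (that is, pointwise limits on $\Z$ of shifts $V(\cdot + N_k)$ along subsequences $N_k \to \infty$) must have the very simple form of a single nonzero value $v$ at the origin and zeros elsewhere, and a rank-one perturbation of the free Laplacian cannot be reflectionless on any set of positive Lebesgue measure inside $(-2,2)$.

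First I would identify the right limits explicitly by taking $N_k = n_k$. The hypothesis $n_{k+1} > 2 n_k$ gives both $n_{k+1} - n_k > n_k$ and $n_k - n_{k-1} > n_{k-1}$, so that $n_{k \pm 1} - n_k \to \pm \infty$ as $k \to \infty$. Hence for each fixed $j \neq 0$, the value $V(n_k + j)$ vanishes for all sufficiently large $k$, while $V(n_k) = v$ always. This yields $V(\cdot + n_k) \to W$ pointwise on $\Z$, where $W(0) = v$ and $W(j) = 0$ for $j \neq 0$.

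Next I would invoke Remling's theorem to conclude that the whole-line Jacobi operator $H_W$ on $\ell^2(\Z)$ must be reflectionless on the essential support $\Sigma_{\mathrm{ac}}$ of the absolutely continuous spectrum of $H_\phi$. But $H_W$ is a rank-one perturbation of the free discrete Laplacian, and a direct scattering-theory calculation---writing generalized eigenfunctions $\psi(n) = e^{i n \eta} + r(E) e^{-i n \eta}$ for $n \le 0$ and $\psi(n) = t(E) e^{i n \eta}$ for $n \ge 0$ at energy $E = 2\cos\eta \in (-2,2)$, and matching via the eigenvalue equation at $n = 0$---produces a reflection coefficient that is nonzero for every such $E$, precisely because $v \neq 0$. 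Thus $H_W$ fails to be reflectionless at every point of $(-2,2)$, forcing $\sigma_{\mathrm{ac}}(H_\phi) \cap (-2,2) = \emptyset$.

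To finish, the essential spectrum outside $[-2,2]$ consists only of the isolated point $\mathrm{sgn}(v) \sqrt{4 + v^2}$ by the statement recalled from \cite[Theorem~3.13]{CFKS}, which, being an isolated eigenvalue, contributes only pure point spectrum. Combining the two conclusions gives $\sigma_{\mathrm{ac}}(H_\phi) = \emptyset$. I expect the genuinely nontrivial input to be Remling's theorem itself; the scattering computation for the rank-one perturbation $H_W$ is routine, and the sparsity assumption \eqref{e.sparsepss2} is used only to guarantee that every right limit is a single-bump potential (faster-than-geometric growth of $n_k$ would yield the same conclusion via the same right-limit argument).
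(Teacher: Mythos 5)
Your argument is correct and is exactly the paper's approach: the paper's entire proof of this lemma is the citation of \cite[Theorem~1.1]{R11}, and you have simply filled in the routine application (single-bump right limit, nonzero reflection coefficient, isolated essential spectrum outside $[-2,2]$). One small inaccuracy that does not affect the logic: not \emph{every} right limit is a single bump (shifting along the middles of the gaps produces the zero potential, which \emph{is} reflectionless), but Remling's theorem only requires you to exhibit one right limit that fails to be reflectionless, and the one you construct along $N_k = n_k$ does the job.
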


\begin{proof}
This follows from \cite[Theorem~1.1]{R11}.
\end{proof}

\begin{lemma}
Suppose $V$ is of the form \eqref{e.sparsepss}, subject to \eqref{e.sparsepss2}, and $E \in (-2,2)$. Let
$$
C_E := \sup \left\{ \left\| \begin{pmatrix} E & -1 \\ 1 & 0 \end{pmatrix}^j \right\| : j \in \Z_+ \right\} < \infty
$$
and
$$
O_{E,v} := \left\| \begin{pmatrix} E - v & -1 \\ 1 & 0 \end{pmatrix} \right\|
$$
If
$$
\sum_{k = 1}^\infty \frac{n_{k+1} - n_k}{C_E^{2k} O_{E,v}^{2k}} = \infty,
$$
then $E$ is not an eigenvalue of $H_\phi$ for any $\phi$.

In particular, if the gaps $n_{k+1} - n_k$ grow super-geometrically {\rm (}resp., at least geometrically of a high enough rate {\rm (}depending on $v$ and $\delta > 0${\rm ))}, then the point spectrum of $H_\phi$ in $(-2,2)$ {\rm (}resp., in $(-2+\delta,2-\delta)${\rm )} is empty for every boundary condition $\phi$.
\end{lemma}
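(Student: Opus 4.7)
The plan is a standard telescoping transfer-matrix argument for sparse potentials. Set $T_E := \begin{pmatrix} E & -1 \\ 1 & 0 \end{pmatrix}$, $T_{E,v} := \begin{pmatrix} E - v & -1 \\ 1 & 0 \end{pmatrix}$, and, for any formal solution $\psi$ of $(H_\phi - E)\psi = 0$, $\Psi(m) := (\psi(m), \psi(m-1))^T$; then $\Psi(m+1) = T_m \Psi(m)$, where $T_m = T_{E,v}$ when $m = n_k$ for some $k$ and $T_m = T_E$ otherwise. The assumption $E \in (-2,2)$ makes $T_E$ elliptic, which is precisely what underlies the finiteness of $C_E$. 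Since $T_E \in \mathrm{SL}(2,\R)$, the singular values of $T_E^{\,j}$ are reciprocal to each other, so also $\|T_E^{-j}\| = \|T_E^{\,j}\| \le C_E$ for every $j \in \Z_+$; similarly $\|T_{E,v}^{-1}\| = O_{E,v}$. I would record these symmetries at the outset so that both upper and lower bounds on free-propagation are available.

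First, in the $k$-th free window $[n_k+1, n_{k+1}]$ (with the convention $n_0 := 0$), the transfer matrix from site $n_k+1$ to site $m$ is a power of $T_E$, so
$$
C_E^{-1} \|\Psi(n_k+1)\| \le \|\Psi(m)\| \le C_E \|\Psi(n_k+1)\|.
$$
Next, across the bump at $n_{k+1}$ one has $\Psi(n_{k+1}+1) = T_{E,v}\, T_E^{\,n_{k+1} - n_k - 1} \Psi(n_k+1)$, so $\|\Psi(n_{k+1}+1)\| \ge (C_E O_{E,v})^{-1} \|\Psi(n_k+1)\|$; iterating from $n_0$ gives $\|\Psi(n_k+1)\| \ge (C_E O_{E,v})^{-k} \|\Psi(1)\|$. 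Combining, I obtain the uniform lower bound $\|\Psi(m)\|^2 \ge C_E^{-2} (C_E O_{E,v})^{-2k} \|\Psi(1)\|^2$ for every $m$ in the $k$-th window.

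Summing $\|\Psi(m)\|^2 = |\psi(m)|^2 + |\psi(m-1)|^2$ over all $m$ (each $|\psi(j)|^2$ appears at most twice) and partitioning by free windows,
$$
2 \|\psi\|_{\ell^2(\Z_+)}^2 \ge C_E^{-2} \|\Psi(1)\|^2 \sum_{k \ge 1} \frac{n_{k+1} - n_k}{(C_E O_{E,v})^{2k}}.
$$
If $\psi$ is a non-trivial solution satisfying the boundary condition, then $\Psi(1) \ne 0$ (else $\psi \equiv 0$ by invertibility of the transfer relation), so the hypothesized divergence is incompatible with $\psi \in \ell^2(\Z_+)$, precluding $E$ from being an eigenvalue of $H_\phi$. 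For the "in particular" clause, I would observe that $E \mapsto C_E$ and $E \mapsto O_{E,v}$ are continuous and hence bounded on any $[-2+\delta, 2-\delta]$: super-geometric growth of the gaps defeats any geometric denominator and yields divergence for every $E \in (-2,2)$, while $n_{k+1} - n_k \gtrsim r^k$ suffices as soon as $r > \sup_{E \in [-2+\delta,\,2-\delta]} (C_E O_{E,v})^2$.

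I do not anticipate any serious obstacle; the core of the argument is a bookkeeping exercise once both upper and lower $T_E$-iterate bounds are in hand. The mildly non-trivial point is verifying that $E \mapsto C_E$ is bounded on compact subsets of $(-2,2)$, which I would establish by explicitly diagonalizing $T_E$, conjugating it to a rotation by a frame that depends continuously on $E \in (-2,2)$, and reading off $C_E$ from the norm of the conjugator.
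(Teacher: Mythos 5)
Your argument is correct and is essentially the paper's approach: the paper's proof is a one-line citation to ``a modification of the proof of \cite[Theorem~4.1]{SS96},'' and your telescoping lower bound $\|\Psi(m)\| \ge \|T(m)\|^{-1}\|\Psi(1)\|$ (via unimodularity) combined with summing $\|\Psi(m)\|^2$ over the free windows is precisely that modification written out in full. The window bookkeeping matches the stated series exactly, and the compactness/continuity remark for the ``in particular'' clause is the standard ellipticity argument, so no gap remains.
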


\begin{proof}
This follows via a modification of the proof of \cite[Theorem~4.1]{SS96}.
\end{proof}

Based on work of Zlato\v{s} \cite{Z04}, one can take the analysis of these sparse potentials further and establish upper and lower bounds for the local scaling exponents of the singular continuous spectral measures that arise in this context under suitable assumptions on the growth of the gaps $n_{k+1} - n_k$ and the size $v$ of the barriers.

It is a natural question whether there are two-sided extensions of these sparse pattern Sturmian sequences that still have maximal pattern complexity $p^*(n) = 2n$. This is of interest as the bulk of the paper is concerned with Schr\"odinger operators on the whole line, and the restriction to a half-line usually generates additional discrete spectrum which is a pure half-line phenomenon. (Specifically, for uniformly recurrent potentials, there is no discrete spectrum in the two-sided case, while there typically is some discrete spectrum in the one-sided case.)

Notice that the sparse examples above are not only non-recurrent, but rather strongly non-recurrent. Here we say that a one-sided sequence is \emph{eventually recurrent} if after removal of a suitable finite prefix the sequence becomes recurrent. Any one-sided sequence that is not eventually recurrent is called \emph{strongly non-recurrent}.

Let us show that it is impossible for a two-sided extension of the sparse pattern Sturmian sequences discussed above to have maximal pattern complexity $p^*(n) = 2n$. More generally, we have the following statement.

\begin{prop}\label{p.nonrec2sided}
Suppose $u \in \mathcal{A}^{\Z_+}$ is a strongly non-recurrent one-sided pattern Sturmian sequence. Then for any two-sided extension $\tilde u$ of $u$ {\rm (}i.e., $\tilde u \in \mathcal{A}^\Z$ with $\tilde u|_{\Z_+} = u${\rm )}, the maximal pattern complexity function of $\tilde u$ obeys $p^*(n) > 2n$ for infinitely many $n$.
\end{prop}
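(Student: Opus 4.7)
The plan is to argue by contradiction: suppose $p^*_{\tilde u}(n) \le 2n$ for all $n \ge N_0$. Since $u$ is pattern Sturmian it is not eventually periodic, and applying the one-sided Kamae--Zamboni characterization to arbitrarily far right-shifts of $\tilde u|_{\Z_+}$ yields $p^*_{\tilde u}(n) \ge 2n$ for every $n$. Combined with the hypothesis, $p^*_{\tilde u}(n) = 2n$ for all $n \ge N_0$.

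The heart of the argument is to promote this equality to the claim that the one-sided tail
\[
y := (\tilde u(-1),\, u(0),\, u(1),\, u(2),\, \ldots) \in \mathcal{A}^{\Z_+}
\]
belongs to the orbit closure $X_u := \overline{\{T^{m'}u : m' \ge 0\}} \subset \mathcal{A}^{\Z_+}$. For each $n \ge N_0$ I would pick a window $\tau_n$ of length $n$ attaining $p^*_u(n, \tau_n) = 2n$. Because $\pi_{\tau_n}(X_u) \subset \pi_{\tau_n}(X_{\tilde u})$ and both sets have cardinality at most $2n$, the inclusion is an equality; in particular, the $\tau_n$-pattern of $y$ coincides with the $\tau_n$-pattern of $T^{m'_n}u$ for some $m'_n \ge 0$. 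The next step is to choose the $\tau_n$ coherently (using the identity $p^*_u(n{+}1) - p^*_u(n) = 2$ to extend a max-achieving window of length $n$ to one of length $n{+}1$) so that $\tau_n \subset \tau_{n+1}$ and $\bigcup_n \tau_n = \Z_+$, after which a diagonal/compactness argument yields a subsequence along which $T^{m'_n}u \to y$ in $\mathcal{A}^{\Z_+}$; this places $y$ in $X_u$.

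With $y \in X_u$ in hand the contradiction is quick. If $y = T^{m'}u$ for some $m' \ge 0$, matching $y(j) = u(j-1)$ for all $j \ge 1$ forces $T^{m'+1}u = u$, and aperiodicity then demands $m' + 1 = 0$, which is impossible. Thus $y$ must be a subsequential limit $\lim_{k\to\infty} T^{m_k}u$ with $m_k \to \infty$; matching coordinates and finiteness of $\mathcal{A}$ give $u(m_k + j) = u(j-1)$ for $k$ large depending on $j$, i.e.\ $T^{m_k+1}u \to u$ along $m_k + 1 \to \infty$. Hence every finite prefix of $u$ reappears in $u$ at arbitrarily large positions, making $u$ recurrent and contradicting strong non-recurrence.

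The principal obstacle is the nesting/diagonalization step of the second paragraph: the argument requires that max-achieving windows of pattern Sturmian sequences can be chosen jointly so that their positions exhaust $\Z_+$ (or at least are cofinal in a sense that determines membership in $X_u$ via closure). For each of the concrete families in Example~\ref{example} this can be extracted from the explicit complexity analysis already available, but a general uniform statement appears to require a Kamae--Zamboni-type structure theorem for max-achieving windows of pattern Sturmian sequences; furnishing such a theorem would be the technical core of completing the proof in full generality.
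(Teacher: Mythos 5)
Your strategy is genuinely different from the paper's, but it has a gap that is not merely technical: the step you yourself identify as the ``technical core'' --- choosing max-achieving windows $\tau_n$ that are nested and satisfy $\bigcup_n \tau_n = \Z_+$ --- is false precisely for the sequences this proposition is about. Without exhaustion of $\Z_+$ you only learn that $y$ agrees with (a limit of) shifts of $u$ on the set $\bigcup_n \tau_n$, which does not place $y$ in the orbit closure $X_u$ and hence does not yield recurrence of $u$. Now take the sparse sequences of Example~\ref{example}(3), essentially the only known strongly non-recurrent pattern Sturmian sequences. For $n=2$ one has $p^*(2)=4$, so a max-achieving window $\{0,t\}$ must realize the pattern $(v,v)$, which forces $t = n_j - n_k$ for some $j>k$. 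Choosing, say, $n_k = 3^{k-1}$, every such difference is even, so the position $1$ lies in no max-achieving window of length $2$; a short computation (the multi-$v$ patterns over a window containing both $0$ and $1$ collapse to too few values) shows the same for length $3$, and the obstruction persists for longer windows. Thus no sequence of max-achieving windows can ever determine $y(1)=u(0)$, and the diagonal argument cannot even begin. Separately, the extension step you invoke is not formally justified either: $p^*_u(n+1)-p^*_u(n)=2$ says the supremum over \emph{all} windows of length $n+1$ is $2n+2$, not that a given max-achieving window of length $n$ extends to one of length $n+1$. The concluding portion of your argument (from $y \in X_u$ to recurrence of $u$) is correct, but note that it uses only non-recurrence of $u$, whereas the statement assumes strong non-recurrence; that discrepancy is another indication that the missing structure theorem is where the real content would have to live.

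The paper takes a much more elementary route that avoids maximal pattern complexity almost entirely: it works with the ordinary block complexity $p_{\tilde u}(n)$, which bounds $p^*_{\tilde u}(n)$ from below, so it suffices to show $p_{\tilde u}(n) > 2n$ infinitely often. Strong non-recurrence enters through Cassaigne's characterization of quasi-Sturmian sequences: if $p_u(n) \le n+j$ for all large $n$, then $u$ is quasi-Sturmian and hence eventually recurrent; therefore $p_u(n) > n + j$ for infinitely many $n$, for every $j$. One then fixes a factor $v$ whose last occurrence in $u$ is at position $k$ and counts the $n-(k+|v|)$ words of length $n$ in $\tilde u$ that straddle the origin and contain that last copy of $v$: by the choice of $v$ none of them occurs in $u$, so they are new, and $p_{\tilde u}(n) > \big[n+k+|v|+j\big] + \big[n-(k+|v|)\big] = 2n+j$ for infinitely many $n$. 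If you want to salvage your approach, you would need a replacement for the exhaustion property; the counting argument above is the natural substitute.
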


The following result is crucial for our proof of Proposition~\ref{p.nonrec2sided}.

\begin{prop}\label{p.quasisturmian}
Suppose that $u \in \mathcal{A}^{\Z_+}$ is strongly non-recurrent. Then, for every $j \in \Z_+$, we have $p_u(n) > n + j$ for infinitely many $n$.
\end{prop}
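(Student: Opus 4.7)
The plan is to establish the contrapositive: if there exists $j \in \Z_+$ such that $p_u(n) \le n + j$ for all but finitely many $n$, then $u$ must be eventually recurrent, which contradicts the strong non-recurrence hypothesis.

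First I would dispose of the eventually periodic case, which is immediate: if $u$ is eventually periodic then $T^N u$ is purely periodic for some $N$, hence recurrent, and we are done. Assume therefore that $u$ is not eventually periodic. By the Morse--Hedlund theorem, $p_u$ is then strictly increasing, so $p_u(n+1) - p_u(n) \ge 1$ for every $n$. Consequently $p_u(n) - n$ is non-decreasing in $n$; since it is bounded above by $j$ for $n$ large, it stabilizes at some integer $c$ with $1 \le c \le j$, so that $p_u(n) = n + c$ for all $n \ge N_0$.

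Next I would invoke Cassaigne's structure theorem for quasi-Sturmian sequences: any one-sided infinite word with $p(n) = n + c$ for all sufficiently large $n$ admits a decomposition $u = w\,\tau(s)$, where $w$ is a finite prefix, $s$ is a Sturmian sequence over a two-letter alphabet, and $\tau$ is a non-erasing morphism. Every Sturmian sequence is uniformly recurrent, and a non-erasing morphism preserves uniform recurrence: each factor of $\tau(s)$ is contained in $\tau(f)$ for some factor $f$ of $s$, and the bounded return gaps of $f$ in $s$ translate to bounded return gaps for $\tau(f)$, and hence for our original factor, in $\tau(s)$. Therefore $T^{|w|}u = \tau(s)$ is recurrent and $u$ is eventually recurrent, yielding the required contradiction.

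The main obstacle is the reliance on Cassaigne's structure theorem from the external literature. A self-contained alternative would proceed by a direct Rauzy-graph analysis: the condition $p_u(n+1) - p_u(n) = 1$ for all large $n$ forces the Rauzy graph $\Gamma_n(u)$ to have a unique right-special vertex with exactly two outgoing edges that re-merge, and tracking how these graphs evolve with $n$ produces an $S$-adic representation of $u$ from which eventual uniform recurrence can be read off. This essentially reproves the quasi-Sturmian structure theorem by hand and would lengthen the section considerably, so citing Cassaigne is preferable if the reference is acceptable.
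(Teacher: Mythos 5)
Your proof is correct and follows essentially the same route as the paper's: both deduce from the complexity bound that $u$ is quasi-Sturmian via Cassaigne's structure theorem and then conclude eventual recurrence from the recurrence of Sturmian sequences. You supply some details the paper leaves implicit (the eventually periodic case, the Morse--Hedlund step showing $p_u(n)-n$ stabilizes, and the preservation of recurrence under a non-erasing morphism), all of which are sound.
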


\begin{proof}
Suppose to the contrary that there exists $j \in \Z_+$ such that for $n \ge n_0$, we have $p_u(n) \le n+j$. This implies that $u$ is quasi-Sturmian, that is, it is of the form $u = wS(\tilde u)$, where $w$ is a finite word and $\tilde u$ is a Sturmian sequence; compare \cite[Proposition~8]{C98}. Since every Sturmian sequence is known to be recurrent, this implies that $u$ is eventually recurrent.
\end{proof}

\begin{proof}[Proof of Proposition~\ref{p.nonrec2sided}.]
Suppose $u \in \mathcal{A}^{\Z_+}$ is a strongly non-recurrent one-sided pattern Sturmian sequence and $\tilde u$ is a two-sided extension of $u$. We will show that the block complexity function of $\tilde u$ obeys $p_{\tilde u}(n) > 2n$ for infinitely many $n$. Since the block complexity function bounds the maximal pattern complexity function from below, this implies the assertion. In fact, we will even prove that for every $j$, we have $p_{\tilde u}(n) > 2n + j$ for infinitely many $n$.

Since $u$ is strongly non-recurrent (here, mere non-recurrence suffices), there exist a finite word $v \in \mathcal{A}^*$ and a $k \in \Z_+$ such that
\begin{itemize}

\item[(i)] $v$ occurs in $u$ at position $k$, that is, $u_k \ldots u_{k+|v|-1} = v$;

\item[(ii)] there is no occurrence of $v$ in $u$ to the right of position $k$, that is, for every $\tilde k > k$, we have  $u_{\tilde k} \ldots u_{\tilde k+|v|-1} \not= v$.

\end{itemize}

By Proposition~\ref{p.quasisturmian}, for any $j$, there are infinitely many $n$ with $p_u(n) > n + k + |v| + j$. Let us consider those $n$ from this infinite set for which we have in addition $n > k + |v|$.

Given such a value of $n$, consider the two-sided extension $\tilde u$ of $u$ in question. Notice that the words
$$
w^{(m)} = \tilde u_{k + |v| - n + m} \tilde u_{k + |v| - n + m + 1} \cdots \tilde u_{k + |v| - n + m + (n-1)}, \; m = 1, \ldots, n - (k + |v|)
$$
have the following properties:
\begin{itemize}

\item[(a)] each $w^{(m)}$ has length $n$;

\item[(b)] each $w^{(m)}$ contains a copy of $v$;

\item[(c)] the words $w^{(m)}$ are distinct and hence there are $n - (k + |v|)$ many of them;

\item[(d)] none of the words $w^{(m)}$ occurs in $u$ and hence is not counted among the $p_u(n)$ words of length $n$ that do occur in $u$.

\end{itemize}

Statement (a) is obvious, statements (b) follows from (i), and statements (c) and (d) follow from (b) and (ii). By statements (a), (c), and (d) it follows that
$$
p_{\tilde u}(n) > \big[ n + k + |v| + j \big] + \big[ n - (k + |v|) \big] = 2n + j.
$$

Since the block complexity function bounds the maximal pattern complexity function from below, it follows that we have $p^*_{\tilde u}(n) > 2n + j$ for infinitely many values of $n$ as well. This concludes the proof.
 \end{proof}

\end{document}